\definecolor{darkgreen}{rgb}{0,0.45,0}
\DeclareMathOperator{\Id}{Id} \DeclareMathOperator{\ob}{ob}
 \DeclareMathOperator{\mor}{mor}
\newcommand{\cat}[1]{\mathrm{#1}}
\newcommand{\id}{\mathrm{id}}
\newcommand{\thg}{{\mathord{\text{--}}}}
\newcommand{\set}[2]{\left\{\,#1 \ \vrule\  #2\,\right\}}
\newcommand{\defeq}{\mathrel{\mathop:}=}
\newcommand{\cd}[2][]{\vcenter{\hbox{\xymatrix#1{#2}}}}
\newcommand{\C}{{\mathcal C}}
\newcommand{\E}{{\mathcal E}}
\newcommand{\I}{{\mathcal I}}
\newcommand{\J}{{\mathcal J}}
\renewcommand{\P}{{\mathcal P}}
\newcommand{\V}{{\mathcal V}}
\newcommand{\W}{{\mathcal W}}
\newcommand{\xtor}[1]{\cdl[@1]{{} \ar[r]|-{\object@{|}}^{#1} & {}}}
\def\hookleftarrowfill@{\arrowfill@\leftarrow\relbar{\relbar\joinrel\rhook}}
\def\twoheadleftarrowfill@{\arrowfill@\twoheadleftarrow\relbar\relbar}
\def\leftbararrowfill@{\arrowdoublefill@{\leftarrow\mkern-5mu}\relbar\mapstochar\relbar\relbar}
\def\Leftbararrowfill@{\arrowdoublefill@{\Leftarrow\mkern-2mu}\Relbar\Mapstochar\Relbar\Relbar}
\def\leftringarrowfill@{\arrowdoublefill@{\leftarrow\mkern-3mu}\relbar{\mkern-3mu\circ\mkern-2mu}\relbar\relbar}
\def\lefttriarrowfill@{\arrowfill@{\mathrel\triangleleft\mkern0.5mu\joinrel\relbar}\relbar\relbar}
\def\Lefttriarrowfill@{\arrowfill@{\mathrel\triangleleft\mkern1mu\joinrel\Relbar}\Relbar\Relbar}
\def\hookrightarrowfill@{\arrowfill@{\lhook\joinrel\relbar}\relbar\rightarrow}
\def\twoheadrightarrowfill@{\arrowfill@\relbar\relbar\twoheadrightarrow}
\def\rightbararrowfill@{\arrowdoublefill@{\relbar\mkern-0.5mu}\relbar\mapstochar\relbar\rightarrow}
\def\Rightbararrowfill@{\arrowdoublefill@{\Relbar\mkern-2mu}\Relbar\Mapstochar\Relbar\Rightarrow}
\def\rightringarrowfill@{\arrowdoublefill@\relbar\relbar{\mkern-2mu\circ\mkern-3mu}\relbar{\mkern-3mu\rightarrow}}
\def\righttriarrowfill@{\arrowfill@\relbar\relbar{\relbar\joinrel\mkern0.5mu\mathrel\triangleright}}
\def\Righttriarrowfill@{\arrowfill@\Relbar\Relbar{\Relbar\joinrel\mkern1mu\mathrel\triangleright}}
\def\leftrightarrowfill@{\arrowfill@\leftarrow\relbar\rightarrow}
\def\mapstofill@{\arrowfill@{\mapstochar\relbar}\relbar\rightarrow}
\renewcommand*\xleftarrow[2][]{\ext@arrow 20{20}0\leftarrowfill@{#1}{#2}}
\providecommand*\xLeftarrow[2][]{\ext@arrow 60{22}0{\Leftarrowfill@}{#1}{#2}}
\providecommand*\xhookleftarrow[2][]{\ext@arrow 10{20}0\hookleftarrowfill@{#1}{#2}}
\providecommand*\xtwoheadleftarrow[2][]{\ext@arrow 60{20}0\twoheadleftarrowfill@{#1}{#2}}
\providecommand*\xleftbararrow[2][]{\ext@arrow 10{22}0\leftbararrowfill@{#1}{#2}}
\providecommand*\xLeftbararrow[2][]{\ext@arrow 50{24}0\Leftbararrowfill@{#1}{#2}}
\providecommand*\xleftringarrow[2][]{\ext@arrow 10{26}0\leftringarrowfill@{#1}{#2}}
\providecommand*\xlefttriarrow[2][]{\ext@arrow 80{24}0\lefttriarrowfill@{#1}{#2}}
\providecommand*\xLefttriarrow[2][]{\ext@arrow 80{24}0\Lefttriarrowfill@{#1}{#2}}
\renewcommand*\xrightarrow[2][]{\ext@arrow 01{20}0\rightarrowfill@{#1}{#2}}
\providecommand*\xRightarrow[2][]{\ext@arrow 04{22}0{\Rightarrowfill@}{#1}{#2}}
\providecommand*\xhookrightarrow[2][]{\ext@arrow 00{20}0\hookrightarrowfill@{#1}{#2}}
\providecommand*\xtwoheadrightarrow[2][]{\ext@arrow 03{20}0\twoheadrightarrowfill@{#1}{#2}}
\providecommand*\xrightbararrow[2][]{\ext@arrow 01{22}0\rightbararrowfill@{#1}{#2}}
\providecommand*\xRightbararrow[2][]{\ext@arrow 04{24}0\Rightbararrowfill@{#1}{#2}}
\providecommand*\xrightringarrow[2][]{\ext@arrow 01{26}0\rightringarrowfill@{#1}{#2}}
\providecommand*\xrighttriarrow[2][]{\ext@arrow 07{24}0\righttriarrowfill@{#1}{#2}}
\providecommand*\xRighttriarrow[2][]{\ext@arrow 07{24}0\Righttriarrowfill@{#1}{#2}}
\providecommand*\xmapsto[2][]{\ext@arrow 01{20}0\mapstofill@{#1}{#2}}
\providecommand*\xleftrightarrow[2][]{\ext@arrow 10{22}0\leftrightarrowfill@{#1}{#2}}
\providecommand*\xLeftrightarrow[2][]{\ext@arrow 10{27}0{\Leftrightarrowfill@}{#1}{#2}}
\newcommand{\twocong}[2][0.5]{\ar@{}[#2] \save ?(#1)*{\cong}\restore}
\newcommand{\twoeq}[2][0.5]{\ar@{}[#2] \save ?(#1)*{=}\restore}
\newcommand{\rtwocell}[3][0.5]{\ar@{}[#2] \ar@{=>}?(#1)+/l 0.2cm/;?(#1)+/r 0.2cm/^{#3}}
\newcommand{\ltwocell}[3][0.5]{\ar@{}[#2] \ar@{=>}?(#1)+/r 0.2cm/;?(#1)+/l 0.2cm/^{#3}}
\newcommand{\ltwocello}[3][0.5]{\ar@{}[#2] \ar@{=>}?(#1)+/r 0.2cm/;?(#1)+/l 0.2cm/_{#3}}
\newcommand{\dtwocell}[3][0.5]{\ar@{}[#2] \ar@{=>}?(#1)+/u  0.2cm/;?(#1)+/d 0.2cm/^{#3}}
\newcommand{\dltwocell}[3][0.5]{\ar@{}[#2] \ar@{=>}?(#1)+/ur  0.2cm/;?(#1)+/dl 0.2cm/^{#3}}
\newcommand{\drtwocell}[3][0.5]{\ar@{}[#2] \ar@{=>}?(#1)+/ul  0.2cm/;?(#1)+/dr 0.2cm/^{#3}}
\newcommand{\dthreecell}[3][0.5]{\ar@{}[#2] \ar@3{->}?(#1)+/u  0.2cm/;?(#1)+/d 0.2cm/^{#3}}
\newcommand{\utwocell}[3][0.5]{\ar@{}[#2] \ar@{=>}?(#1)+/d 0.2cm/;?(#1)+/u 0.2cm/_{#3}}
\newcommand{\dtwocelltarg}[3][0.5]{\ar@{}#2 \ar@{=>}?(#1)+/u  0.2cm/;?(#1)+/d 0.2cm/^{#3}}
\newcommand{\utwocelltarg}[3][0.5]{\ar@{}#2 \ar@{=>}?(#1)+/d  0.2cm/;?(#1)+/u 0.2cm/_{#3}}
\theoremstyle{plain}
\newtheorem{Thm}{Theorem}[subsection]
\newtheorem{Prop}[Thm]{Proposition}
\newtheorem{Lemma}[Thm]{Lemma}
\theoremstyle{definition}
\newtheorem{Defn}[Thm]{Definition}
\theoremstyle{remark}
\newtheorem{Rk}[Thm]{Remark}
\newcommand{\ty}{\mathsf{type}}
\renewcommand{\c}{,\,\,}
\renewcommand{\r}{\mathrm{r}}
\renewcommand{\J}{\mathrm{J}}
\newcommand{\ctxt}{\mathsf{ctxt}}
\begin{document}
 \leftmargini=2em
\title{Types are weak $\omega$-groupoids}
\author{Benno van den Berg}
\address{Technische Universit\"at Darmstadt, Fachbereich Mathematik, Schlo\ss garten\-stra\ss e 7, 64289 Darmstadt, Germany}
\email{berg@mathematik.tu-darmstadt.de}
\author{Richard Garner}
\address{Department of Pure Mathematics and Mathematical Statistics, University of Cambridge, Cambridge CB3 0WB, UK}
\email{rhgg2@cam.ac.uk} \subjclass[2000]{Primary: 03B15, 18D05; Secondary:
18D50}
\begin{abstract}
We define a notion of weak $\omega$-category internal to a model of
Martin-L\"of type theory, and prove that each type bears a canonical weak
$\omega$-category structure obtained from the tower of iterated identity types
over that type. We show that the $\omega$-categories arising in this way are in
fact $\omega$-groupoids.
\end{abstract}
 \maketitle

\renewcommand{\vec}[1]{#1}
\section{Introduction}
It has long been understood that there is a close connection between algebraic
topology and higher-dimensional category
theory~\cite{Grothendieck1983Pursuing}. More recently, it has become apparent
that both are in turn related to the intensional type theory of
Martin-L\"of~\cite{Nordstrom1990Programming}. Whilst attempts to make this link
precise have only borne fruit in the past few
years~\cite{Awodey2008Homotopy,Gambino2008identity,Garner2008Two-dimensional,Warren2008Homotopy},
the basic idea dates back to an observation made by Hofmann and Streicher
in~\cite{Hofmann1998groupoid}. Recall that in Martin-L\"of type theory, we may
construct from a type $A$ and elements $a, b \in A$, a new type $\Id(a,b)$
whose elements are to be thought of as proofs that $a$ and $b$ are
propositionally equal. Hofmann and Streicher observe that the type-theoretic
functions
\begin{align*}
    1 &\to \Id(a, a)\,\text, &
    \Id(b, c) \times \Id(a, b) & \to \Id(a, c) &
    \text{and} \quad \Id(a, b) & \to \Id(b, a)
\end{align*}
expressing the reflexivity, transitivity and symmetry of propositional equality
allow us to view the type $A$ as a \emph{groupoid}---a category whose every
morphism is invertible---wherein objects are elements $a \in A$ and morphisms
$a \to b$ are elements $p \in \Id(a, b)$. However, as is made clear
in~\cite{Hofmann1998groupoid}, this is not the end of the story. The groupoid
axioms for $A$ hold only ``up to propositional equality''; which is to say
that, for example, the associativity diagram
\begin{equation*}
   \cd{
    \Id(c,d) \times \Id(b,c) \times \Id(a,b) \ar[d] \ar[r] &
    \Id(c,d) \times \Id(a,c) \ar[d] \\
    \Id(b,d) \times \Id(a,b) \ar[r] &
    \Id(a,d)
   }
\end{equation*}
does not commute on the nose, but only up to suitable terms
\begin{equation*}
    \alpha_{p,q,r} \in \Id\big( r \circ (q \circ p), \, (r \circ q) \circ
    p\big)\qquad \text{($p \in \Id(a,b)$, $q \in \Id(b,c)$, $r \in
    \Id(c,d)$).}
\end{equation*}
Thus, if we wish to view $A$ as an honest groupoid, we must first quotient out
the sets of elements $p \in \Id(a,b)$ by propositional equality. A more
familiar instance of the same phenomenon occurs in constructing the fundamental
groupoid of a space---where we must identify paths up to homotopy---and this
suggests the following analogy: that types are like topological spaces, and
that propositional equality is like the homotopy relation. Using the machinery
of abstract homotopy theory, this analogy has been given a precise form
in~\cite{Awodey2008Homotopy}, which constructs type theoretic structures from
homotopy theoretic ones, and in~\cite{Gambino2008identity}, which does the
converse.

\looseness=-1 The connection with algebraic topology in turn suggests the one
with higher-dimensional category theory. A more sophisticated construction of
the fundamental groupoid of a space (suggested
in~\cite{Grothendieck1983Pursuing} and made rigorous
in~\cite{Batanin1998Monoidal}) does not quotient out paths by the homotopy
relation; but instead incorporates these homotopies---and all higher homotopies
between them---into an infinite-dimensional categorical structure known as a
\emph{weak $\omega$-groupoid}, whose various identities, compositions and
inverses satisfy coherence laws, not strictly, but ``up to all higher
homotopies''. This leads us to ask whether the construction of the
type-theoretic ``fundamental groupoid'' admits a similar refinement, which
constructs a weak $\omega$-groupoid from a type by considering not just
elements of the type, and proofs of their equality, but also proofs of equality
between such proofs, and so on. The principal aim of this paper is to show this
to be the case.

In order to give the proof, we must first choose an appropriate notion of weak
$\omega$-groupoid to work with; and since, in the literature, weak
$\omega$-groupoids are studied in the broader context of \emph{weak
$\omega$-categories}---which are ``weak $\omega$-groupoids without the
inverses''---this is tantamount to choosing an appropriate notion of weak
$\omega$-category. There are a number of definitions to pick from, and these
differ from each other both in their general approach and in the details;
see~\cite{Leinster2002survey} for an overview. Of these, it is the definition
of Batanin~\cite{Batanin1998Monoidal} which matches the type theory most
closely, for the following two reasons. Firstly, its basic cellular data are
\emph{globular}: which is to say that an $n$-cell $\alpha \colon x \to y$ can
only exist between a pair of parallel $(n-1)$-cells $x, y \colon f\to g$. A
corresponding property holds for proofs of equality in type theory: to know
that $\alpha \in \Id(x,y)$, we must first know that $x$ and $y$ inhabit the
same type $\Id(f,g)$. Secondly, Batanin's definition is \emph{algebraic}: which
is to say that composition operations are explicitly specified, rather than
merely asserted to exist. This accords with the constructivist notion---central
to the spirit of intensional type theory---that to know something to exist is
nothing less than to be provided with a witness to that fact. On these grounds,
it is Batanin's definition which we will adopt here; or rather, a mild
reformulation of his definition given by Leinster in~\cite{Leinster2002survey}.

The paper is arranged as follows. In Section~\ref{prep}, we recall Batanin's
theory of weak $\omega$-categories, the appropriate specialisation to weak
$\omega$-groupoids, and the necessary background from intensional type theory.
Then in Section~\ref{sec3} we give the proof of our main result. We begin in
Section~\ref{overview} with an explicitly type-theoretic, but informal,
account. When we come to make this precise, it turns out to be convenient to
isolate just those categorical properties of the type theory which make the
proof go through, and then to work in an axiomatic setting assuming only these.
We describe this setting in Section~\ref{axiomatic}, and then in
Sections~\ref{proof1} and~\ref{proof2}, use it to give a formal proof that
every type is a weak $\omega$-groupoid.

It seems appropriate to say a few words about the history of this paper. The
main result was described by the first-named author in 2006 in a presentation
at the workshop ``Identity Types---Topological and Categorical Structure'' held
at Uppsala University~\cite{BergTypes}. The details of the proof were then
worked out by both authors during a 2008 visit by the first author to Uppsala;
and it was at this stage that the axiomatic approach was introduced. Whilst
preparing this manuscript for publication, we become aware that, independently,
Peter Lumsdaine had been considering the same question. His analysis may be
found in~\cite{LumsdaineInpressWeak}. Let us remark only that, where our
argument is category-theoretic in nature, that given by Lumsdaine is
essentially proof-theoretic. We gratefully acknowledge the support of Uppsala
University's Department of Mathematics, and extend our thanks to Erik Palmgren
for organising the aforementioned workshop. The second-named author also
acknowledges the support of a Research Fellowship of St John's College,
Cambridge and a Marie Curie Intra-European Fellowship, Project No.\ 040802.

\section{Preparatory material}\label{prep}
In this section, we review the material necessary for our main result; firstly,
from higher category theory, and secondly, from Martin-L\"of type theory.

\subsection{Weak $\omega$-categories and weak $\omega$-groupoids}
As mentioned in the Introduction, the most appropriate definition of weak
$\omega$-category for our purposes is that of~\cite{Batanin1998Monoidal}, which
describes them as globular sets equipped with algebraic structure. A
\emph{globular set} is a diagram of sets and functions
\begin{equation*}
    \cd[@C-0.5em]{
    X_0 & X_1 \ar@<3pt>[l]^-{t} \ar@<-3pt>[l]_-{s} & X_2 \ar@<3pt>[l]^-{t} \ar@<-3pt>[l]_-{s} & X_3\ar@<3pt>[l]^-{t} \ar@<-3pt>[l]_-{s} & \,\cdots \ar@<3pt>[l]^-{t} \ar@<-3pt>[l]_-{s}}
\end{equation*}
satisfying the globularity equations $ss = st$ and $ts = tt$. We refer to
elements $x \in X_n$ as \emph{$n$-cells} of $X$, and write them as $x \colon sx
\to tx$. In this terminology, the globularity equations express that any
$(n+2)$-cell $f \to g$ must mediate between $(n+1)$-cells $f$ and $g$ which are
\emph{parallel}, in the sense of having the same source and target. Globular
sets also have a coinductive characterisation: to give a globular set $X$ is to
give a set $\ob X$ of objects, and for each $x, y \in \ob X$, a globular set
$X(x,y)$.

The algebraic structure required to make a globular set into a weak
\mbox{$\omega$-category} is encoded by any one of a certain class of monads on
the category of globular sets: those arising from \emph{normalised,
contractible, globular operads}. Informally, such monads are obtained by
``deforming'' the monad $T$ whose algebras are strict $\omega$-categories. To
make this precise, we must first recall some details concerning strict
$\omega$-categories.

If $\V$ is any category with finite products, then one can speak of categories
enriched in $\V$, and of $\V$-enriched functors between
them~\cite{Kelly1982Basic}. The category $\V\text-\cat{Cat}$ of small
$\V$-categories is then itself a category with finite products, so that we can
iterate the process; and when we do so starting from $\V = 1$, we obtain the
sequence $1$,~$\cat{Set}$, $\cat{Cat}$, $2\text-\cat{Cat}$, \dots, whose $n$th
term is the category of small strict $(n-1)$-categories. Now, because any
finite-product preserving functor \mbox{$\V \to \W$} induces a finite-product
preserving functor $\V\text-\cat{Cat} \to \W\text-\cat{Cat}$, we obtain, by
iteration on the unique functor $\cat{Set} \to 1$, a chain
\begin{equation*}
    \cdots \to n\text-\cat{Cat} \cdots \to 2\text-\cat{Cat} \to \cat{Cat} \to
    \cat{Set} \to 1\ \text;
\end{equation*}
and $\omega$-$\cat{Cat}$, the category of small strict $\omega$-categories, is
the limit of this sequence. Unfolding this definition, we find that a strict
$\omega$-category is given by first, an underlying globular set; next,
operations of identity and composition: so for each $n$-cell $x$, an
$(n+1)$-cell $\id_x \colon x \to x$, and for each pair of $n$-cells $f$ and $g$
sharing a $k$-cell boundary (for $k < n$), a composite $n$-cell $g \circ_k f$;
and finally, axioms which express that any two ways of composing a diagram of
$n$-cells using the above operations yield the same result.

There is an evident forgetful functor $U \colon \omega\text-\cat{Cat} \to
\cat{GSet}$, where $\cat{GSet}$ denotes the category of globular sets; and it
is shown in~\cite[Appendix B]{Leinster2004Operads} that this has a left adjoint
and is finitarily monadic. The corresponding monad $T$ on the category of
globular sets may be described as follows. First we give an inductive
characterisation of $T1$, its value at the globular set with one cell in every
dimension. We have:
\begin{itemize}
\item $(T1)_0 = \{\star\}$; and
\item $(T1)_{n+1} = \set{(\pi_1, \dots, \pi_k)}{k \in \mathbb N, \pi_1,
    \dots, \pi_k \in (T1)_n}$.
\end{itemize}
The source and target maps $s, t \colon (T1)_{n+1} \to (T1)_n$ coincide and we
follow~\cite{Leinster2004Operads} in writing $\partial$ for the common value.
This too may be described inductively:
\begin{itemize}
\item $\partial(\pi) = \star$ for $\pi \in (T1)_1$; and
\item $\partial(\pi_1, \dots, \pi_k) = (\partial(\pi_1), \dots,
    \partial(\pi_k))$ otherwise.
\end{itemize}
We regard elements of $(T1)_n$ as indexing possible shapes for pasting diagrams
of $n$-cells. For example, $((\ast), (\ast, \ast)) \in (T1)_2$ corresponds to
the shape

\begin{equation}\label{shape}
    \cd{
      \bullet \ar@/^1em/[r] \ar@/_1em/[r] \dtwocell{r}{} &
      \bullet \ar@/^1.5em/[r]|{}="a" \ar[r]|{}="b" \ar@/_1.5em/[r]|{}="c" &
      \bullet \ar@{=>}"a"+/d  0.15cm/;"b"+/u 0.15cm/ \ar@{=>}"b"+/d  0.15cm/;"c"+/u 0.15cm/
    }\ \text.
\end{equation}
\vskip\baselineskip \noindent We can make this formal as follows. By induction,
we associate to each element $\pi \in (T1)_n$ a globular set $\hat \pi$ which
is the ``shape indexed by $\pi$'':
\begin{itemize}
\item If $\pi = \star$, then $\hat \pi$ is the globular set with $\ob \hat
    \pi = \{\bullet\}$ and $\hat \pi(\bullet, \bullet) = \emptyset$.
\item If $\pi = (\pi_1, \dots, \pi_k)$, then $\hat \pi$ is the globular set
    with $\ob \hat \pi = \{0, \dots, k\}$, $\hat \pi(i-1, i) =
    \widehat{\pi_i}$ (for $1 \leqslant i \leqslant k$), and $\hat \pi(i, j)
    = \emptyset$ otherwise.
\end{itemize}
By a further induction, we define source and target embeddings $\sigma, \tau
\colon \widehat{\partial \pi} \to \hat \pi$:
\begin{itemize}
\item For $\pi \in (T1)_1$, the maps $\sigma, \tau \colon \hat \star \to
    \hat \pi$ send the unique object of $\hat \star$ to the smallest and
    largest elements of $\ob \hat \pi$, respectively.
\item Otherwise, for $\pi = (\pi_1, \dots, \pi_k)$ the morphisms $\sigma$
    and $\tau$ are the identity on objects and map $\widehat{\partial
    \pi}(i-1, i)$ into $ \widehat \pi(i-1, i)$ via $\sigma, \tau \colon
    \widehat{\partial \pi_i} \to \widehat{\pi_i}$.
\end{itemize}
Taken together, these data---the globular set $T1$, the globular sets $\hat
\pi$ and the maps $\sigma$ and $\tau$---completely determine the functor $T$;
this by virtue of it being \emph{familially representable} in the sense
of~\cite[Definition C.3.1]{Leinster2004Higher} (though see
also~\cite{Carboni1995Connected}). Explicitly, $TX$ is the globular set whose
cells are pasting diagrams labelled with cells of $X$:
\begin{equation*}
    (TX)_n = \sum_{\pi \in (T1)_n} \cat{GSet}(\hat \pi, X)\ \text,
\end{equation*}
and whose source and target maps are induced in an obvious way by the maps
$\sigma$ and $\tau$. The unit and multiplication of the monad $T$ are
\emph{cartesian} natural transformations---which is to say that all of their
naturality squares are pullbacks---from which it follows that these are in turn
determined by the components $\eta_1 \colon 1 \to T1$ and $\mu_1 \colon TT1 \to
T1$. The former map associates to the unique $n$-cell of $1$ the pasting
diagram $\iota_n
\defeq (\cdots (\star)\cdots) \in (T1)_n$, whilst the latter sends a typical
element
\begin{equation*}
    (\pi \in (T1)_n\c \phi \colon \hat \pi \to T1)
\end{equation*}
of $(TT1)_n$ to the element $\phi \circ \pi \in (T1)_n$ obtained by
substituting into $\pi$ the pasting diagrams which $\phi$ indexes
(see~\cite[Section 4.2]{Leinster2004Operads} for a pictorial account of this
process).

%
A \emph{globular operad} can now be defined rather succinctly: it is a monad
$P$ on $\cat{GSet}$ equipped with a cartesian monad morphism $\rho \colon P
\Rightarrow T$. The cartesianness of $\rho$ implies that the functor part of
$P$ is determined by its component at $1$ together with the augmentation map
$\rho_1 \colon P1 \to T1$, and it will be convenient to have a description of
$P$ in these terms. Given $\pi \in (T1)_n$, we write $P_\pi$ for the set of
those $\theta \in (P1)_n$ which are mapped to $\pi$ by $\rho_1$, and write $s,
t \colon P_\pi \to P_{\partial \pi}$ for the corresponding restriction of the
source and target maps of $P1$. The value of $P$ at an arbitrary globular set
$X$ is now given (up to isomorphism) by
\begin{equation}\label{Pdesc}
    (PX)_n = \sum_{\pi \in (T1)_n} P_\pi \times \cat{GSet}(\hat \pi,
    X)\ \text,
\end{equation}
with the source and target maps determined in the obvious way. Thus, if we
think of a $T$-algebra structure on $X$ as providing a unique way of composing
each $X$-labelled pasting diagram of shape $\pi$, then a $P$-algebra structure
provides a set of possible ways of composing such diagrams, indexed by the
elements of $P_\pi$.

It follows from the cartesianness of $\rho$ that the unit and the
multiplication of $P$ are themselves cartesian natural transformations, and
hence determined by their components $\eta_1 \colon 1 \to P1$ and $\mu_1 \colon
PP1 \to P1$. The former sends the unique $n$-cell of $1$ to an element $\iota_n
\in P_{\iota_n}$, which we think of as the trivial composition operation of
dimension $n$; whilst the latter assigns to the element
\begin{equation*}
    (\pi \in (T1)_n,\ \theta \in P_\pi,\ \psi \colon \hat \pi \to P1)
\end{equation*}
of $(PP1)_n$ an element $\theta \circ \psi \in P_{\phi \circ \pi}$ (where
$\phi$ is the composite $\rho_1 \psi \colon \hat \pi \to T1$), which we think
of as the composition operation obtained by substituting into $\theta$ the
collection of operations indexed by $\psi$.

Not every globular operad embodies a sensible theory of weak
$\omega$-categories---since, for example, the identity monad on $\cat{GSet}$ is
a globular operad---but~\cite{Batanin1998Monoidal} provides two conditions
which together distinguish those which do: \emph{normalisation} and
\emph{contractibility}. Normalisation is straightforward; it asserts that the
monad $P$ is bijective on objects in the sense that $(PX)_0 \cong X_0$,
naturally in $X$, or equivalently, that the set $P_\star$ is a singleton. The
second condition  is a little more subtle. A globular operad $P$ is said to be
\emph{contractible} if:
\begin{enumerate}[(a)]
\item Given $\pi \in (T1)_1$ and $\theta_1, \theta_2 \in P_\star$, there
    exists an element $\phi \in P_\pi$ with $s(\phi) = \theta_1$ and
    $t(\phi) = \theta_2$;
\item Given $\pi \in (T1)_n$ (for $n > 1$) and $\theta_1, \theta_2 \in
    P_{\partial\pi}$ satisfying $s(\theta_1) = s(\theta_2)$ and
    $t(\theta_1) = t(\theta_2)$, there exists an element $\phi \in P_\pi$
    such that $s(\phi) = \theta_1$ and $t(\phi) = \theta_2$.
\end{enumerate}
Contractibility expresses that that a globular operad has ``enough'' ways of
composing to yield a theory of weak $\omega$-categories. In homotopy-theoretic
terms, a contractible globular operad is a ``deformation'' of the monad~$T$; an
idea which can be made precise using the language of weak factorisation
systems: see~\cite{Garner2008homotopy-theoretic}.
\begin{Defn}
A \emph{weak $\omega$-category} is an algebra for a contractible, normalised,
globular operad: more formally, it is a pair $(P,X)$, where $P$ is a
contractible, normalised, globular operad and $X$ is an algebra for it.
\end{Defn}
\begin{Rk}
Some consideration must be paid to the exact force of the term
\emph{contractible}, which has been used in different ways by different
authors; our usage accords with that of~\cite[Definition
9.1.3]{Leinster2004Higher}. In particular, the reader should carefully
distinguish between the \emph{property} of being contractible described above,
and the corresponding \emph{structure} of being equipped with a contraction.
\end{Rk}

We now turn from the definition of weak $\omega$-category to that of weak
$\omega$-groupoid. For this we will require the coinductive notion of
\emph{equivalence} in a weak $\omega$-category.
\begin{Defn}\label{wkequiv} Let $(P,X)$ be a weak $\omega$-category. An \emph{equivalence} $x
\simeq y$ between parallel $n$-cells $x, y$ is given by:
\begin{itemize}
\item $n+1$-cells $f \colon x \to y$ and $g \colon y \to x$;
\item Equivalences $\eta \colon g \circ f \simeq \id_x$ and $\epsilon
    \colon f \circ g \simeq \id_y$.
\end{itemize}
We say that an $(n+1)$-cell $f \colon x \to y$ is \emph{weakly invertible} if
it participates in an equivalence $(f, g, \eta, \epsilon)$.
\end{Defn}
In order for this definition to make sense, we must determine what is meant by
the expressions ``$\id_x$'', ``$\id_y$'', ``$g \circ f$'' and ``$f \circ g$''
appearing in it, which we may do as follows. First, for each $n \geqslant 1$,
we define the pasting diagrams $0_n$ and $2_n \in (T1)_n$ to be given by
\begin{equation*}
    0_n \defeq \underbrace{(\cdots (}_{n\ \text{times}})\cdots) \qquad \text{and} \qquad
    2_n \defeq \underbrace{(\cdots (}_{n\ \text{times}}\!\!\star, \star)\cdots)\
    \text.
\end{equation*}
Next, if $P$ is a normalised, contractible globular operad, then we define a
\emph{system of compositions} for $P$ to be a choice, for each $n \geqslant 1$,
of operations $i_n \in P_{0_n}$ and $m_n \in P_{2_n}$. Note that the
contractibility of $P$ ensures that it will possess at least one system of
compositions. Finally, if we are given a system of compositions and a
$P$-algebra $X$, then we define the functions
\begin{equation*}
    \id_{(\thg)} \colon X_{n-1} \to X_n \qquad \text{and} \qquad \circ \colon X_{n+1} \mathbin{{}_s\!\times_t} X_{n+1} \to
    X_n
\end{equation*}
to be the interpretations of the operations $i_n$ and $m_n$ respectively. This
allows us to give meaning to the undefined expressions appearing in
Definition~\ref{wkequiv}.
\begin{Defn}
A weak $\omega$-category $(P,X)$ is a \emph{weak $\omega$-groupoid} if every
cell of $X$ is weakly invertible with respect to every system of compositions
on $P$.
\end{Defn}
It will be convenient to give a more elementary reformulation of the notion of
weak $\omega$-groupoid due to Cheng~\cite{Cheng2007w-category}. This is given
in terms of \emph{duals}. If $f \colon x \to y$ is an $n$-cell (for $n
\geqslant 1$) in a weak $\omega$-category, then a \emph{dual} for $f$ is an
$n$-cell $f^\ast \colon y \to x$ together with $(n+1)$-cells $\eta \colon \id_x
\to f^\ast \circ f$ and $\epsilon \colon f \circ f^\ast \to \id_y$, subject to
no axioms. Again, this definition is to be interpreted with respect to some
given system of compositions.
\begin{Prop}\label{characterisation}
A weak $\omega$-category is a weak $\omega$-groupoid if and only if, with
respect to every system of compositions, every cell has a dual.
\end{Prop}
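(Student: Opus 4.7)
The forward implication is an exercise in unpacking: given an equivalence $(f, g, \eta, \epsilon)$ witnessing weak invertibility of an $(n+1)$-cell $f \colon x \to y$, the equivalence $\eta \colon g \circ f \simeq \id_x$ carries, as part of its own data, an underlying $(n+2)$-cell in the direction $\id_x \to g \circ f$, and similarly $\epsilon \colon f \circ g \simeq \id_y$ carries an underlying $(n+2)$-cell in the direction $f \circ g \to \id_y$. Taking $f^\ast \defeq g$ together with these two underlying cells then yields the required dual; no verification is necessary, as the notion of dual is pure structure with no axioms.

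For the converse, the plan is to inflate the `one-level' data supplied by duals into the infinite tower of data constituting an equivalence, by a corecursive construction. Fix a choice of dual $(f^\ast, \eta_f, \epsilon_f)$ for every cell $f$. I would then define corecursively, for each cell $f \colon x \to y$, an equivalence $E(f) \defeq (f, f^\ast, \eta, \epsilon)$ in which $\eta$ is the equivalence $f^\ast \circ f \simeq \id_x$ obtained by applying $E$ to the cell $\eta_f \colon \id_x \to f^\ast \circ f$ (and then reversing direction via the evident symmetry operation on equivalences $(p,q,\alpha,\beta) \mapsto (q,p,\beta,\alpha)$), and $\epsilon$ is the equivalence $f \circ f^\ast \simeq \id_y$ given directly by $E(\epsilon_f)$, since $\epsilon_f$ already points in the direction $f \circ f^\ast \to \id_y$. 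Crucially, each corecursive invocation of $E$ is on a cell of strictly greater dimension than~$f$.

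The only real subtlety is the legitimacy of this corecursion, and I expect this to be the main obstacle to writing the argument cleanly. The dimensional drop does the work: the cells of $E(f)$ at any given dimension are produced directly from the dual of $f$, while all recursive calls pass to cells one dimension higher; unfolding to any finite depth therefore yields a well-defined finite tree of cells satisfying the clauses of Definition~\ref{wkequiv}. Because that definition imposes only structure and no equations, this is all that is needed. In short, the heart of the argument is organisational: once the corecursion is set up with the right typing, the equivalence of the two characterisations reduces to repackaging of data, with the dual being precisely the truncation of an equivalence to its lowest level, and $E$ performing the inverse inflation.
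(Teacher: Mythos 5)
Your proposal is correct and takes essentially the same route as the paper: the paper's entire proof is the two words ``By coinduction'', and your guarded corecursive construction of an equivalence from a global choice of duals (together with the trivial forward unpacking of the lowest layer of an equivalence) is precisely the content of that coinduction. Nothing further is required, since neither duals nor equivalences impose any equations.
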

\begin{proof}
By coinduction.
\end{proof}

\subsection{Martin-L\"of type theory}\label{mltt}
By \emph{intensional Martin-L\"of type theory}, we mean the logical calculus
set out in Part~II of~\cite{Nordstrom1990Programming}. We now summarise this
calculus. It has four basic forms of judgement: $A \ \ty$ (``$A$ is a type'');
$a \in A$ (``$a$ is an element of the type $A$''); $A = B \ \ty$ (``$A$ and $B$
are definitionally equal types''); and $a = b \in A$ (``$a$ and $b$ are
definitionally equal elements of the type $A$''). These judgements may be made
either absolutely, or relative to a context $\Gamma$ of assumptions, in which
case we write them as
\begin{equation*}
  (\Gamma) \ A\ \ty\text, \qquad
  (\Gamma) \ a \in A\text, \qquad
  (\Gamma) \ A = B\ \ty \qquad \text{and} \qquad
  (\Gamma) \ a = b\in A
\end{equation*}
respectively. Here, a \emph{context} is a list $\Gamma = (x_1 \in A_1\c x_2 \in
A_2\c \dots\c x_n \in A_{n-1})$, wherein each $A_i$ is a type relative to the
context $(x_1 \in A_1\c \dots\c x_{i-1} \in A_{i-1})$. There are now some
rather natural requirements for well-formed judgements: in order to assert that
$a \in A$ we must first know that $A \ \ty$; to assert that $A = B \ \ty$ we
must first know that $A \ \ty$ and $B \ \ty$; and so on. We specify intensional
Martin-L\"of type theory as a collection of inference rules over these forms of
judgement. Firstly we have the \emph{equality rules}, which assert that the two
judgement forms $A = B \ \ty$ and $a = b \in A$ are congruences with respect to
all the other operations of the theory; then we have the \emph{structural
rules}, which deal with weakening, contraction, exchange and substitution; and
finally, the \emph{logical rules}, which specify the type-formers of our
theory, together with their introduction, elimination and computation rules.
For the purposes of this paper, we require only the rules for the identity
types, which we list in Table \ref{fig1}. We commit the usual abuse of notation
in leaving implicit an ambient context $\Gamma$ common to the premisses and
conclusions of each rule, and omitting the rules expressing stability under
substitution in this ambient context. Let us remark also that in the rules
$\Id\textsc{-elim}$ and $\Id\textsc{-comp}$ we allow the type $C$ over which
elimination is occurring to depend upon an additional contextual parameter
$\Delta$. Were we to add $\Pi$-types (dependent products) to our calculus, then
these rules would be equivalent to the usual identity type rules. However, in
the absence of $\Pi$-types, this extra parameter is essential to derive all but
the most basic properties of the identity type.

\begin{table}
\vskip\baselineskip \emph{Identity types}
\begin{equation*}
    \inferrule*[right=$\Id$-form;]{A\ \ty \\ a, b \in A}{\Id_A(a, b) \ \ty} \qquad
    \inferrule*[right=$\Id$-intro;]{A\ \ty \\ a \in A}{\r(a) \in \Id_A(a, a)}
\end{equation*}
\ \begin{equation*}
\inferrule*[right=$\Id$-elim;]{\big(x, y \in A \c p \in \Id_A(x, y) \c \Delta(x,y,p)\big)\  C(x, y, p) \ \ty\\
  \big(x \in A \c \Delta(x, x, \r(x))\big)\  d(x) \in C(x, x, \r(x))\\
  a, b \in A \\ p \in \Id_A(a, b)}
  {\big(\Delta(a,b,p)\big)\  \J_{d}(a, b, p) \in C(a, b, p)}
\end{equation*}
\
\begin{equation*}
\inferrule*[right=$\Id$-comp.]{\big(x, y \in A \c p \in \Id_A(x, y) \c \Delta(x,y,p)\big)\  C(x, y, p) \ \ty\\
  \big(x \in A \c \Delta(x, x, \r(x))\big)\  d(x) \in C(x, x, \r(x))\\
  a \in A}
  {\big(\Delta(a,a,\r(a))\big)\ \J_{d}(a,a,\r(a)) = d(a) \in C(a, a, \r(a))}
\end{equation*}
\caption{Identity type rules}\label{fig1}
\end{table}

We now establish some further notational conventions. Where it improves clarity
we may omit brackets in function applications, writing $hgfx$ in place of
$h(g(f(x)))$, for example. We may drop the subscript $A$ in an identity type
$\Id_A(a, b)$ where no confusion seems likely to occur. Given $a, b \in A$, we
may say that $a$ and $b$ are \emph{propositionally equal} to indicate that the
type $\Id(a, b)$ is inhabited. We will also make use of \emph{vector notation}
in the style of~\cite{DeBruijn1991Telescopic}. Given a context $\Gamma = (x_1
\in A_1, \dots, x_n \in A_n)$, we may abbreviate a series of judgements:
\begin{equation*}
a_1 \in A_1\text, \qquad a_2 \in A_2(a_1)\text, \qquad \dots \qquad a_n \in
A_n(a_1, \dots, a_{n-1})\text,
\end{equation*}
as $\vec a \in \Gamma$, where $\vec a
\defeq (a_1, \dots, a_n)$. We may also use this notation to abbreviate sequences of
hypothetical elements; so, for example, we may specify a dependent type in
context $\Gamma$ as $(\vec x \in \Gamma)\ A(\vec x) \ \ty$. We will also make
use of~\cite{DeBruijn1991Telescopic}'s notion of \emph{telescope}. Given
$\Gamma$ a context as before, this allows us to abbreviate the series of
judgements
\begin{equation*}
\begin{aligned}
(\vec x \in \Gamma) & \ B_1(\vec x) \ \ty\text,\\
(\vec x \in \Gamma \c y_1 \in B_1) & \ B_2(\vec x, y_1) \ \ty\text,\\
& \vdots \\
(\vec x \in \Gamma \c y_1 \in B_1 \c \dots \c y_{m-1} \in B_{m-1}) & \ B_m(\vec
x, y_1, \dots y_{m-1}) \ \ty
\end{aligned}
\end{equation*}
as $
  (\vec x \in \Gamma)\ \Delta(\vec x) \ \ctxt
$, where $\Delta(\vec x) \defeq (y_1 \in B_1(\vec x)\c y_2 \in B_2(\vec x,
y_1)\c \dots )$. We say that $\Delta$ is a \emph{context dependent upon
$\Gamma$}. Given such a dependent context, we may abbreviate the series of
judgements
\begin{equation*}
\begin{gathered}
(\vec x \in \Gamma) \ f_1(\vec x) \in B_1(\vec x)\\
\vdots \\
(\vec x \in \Gamma) \ f_m(\vec x) \in B_m(\vec x, f_1(\vec x), \dots,
f_{m-1}(\vec x))\text,
\end{gathered}
\end{equation*}
as $    (\vec x \in \Gamma)\ \vec f(\vec x) \in \Delta(\vec x)$, and say that
$\vec f$ is a \emph{dependent element} of $\Delta$. We can similarly assign a
meaning to the judgements
  $(\vec x \in \Gamma)\ \Delta(\vec x) = \Theta(\vec x) \ \ctxt$ and $(\vec x \in \Gamma)\ \vec f(\vec x) = \vec g(\vec x) \in \Delta(\vec
  x)$,
expressing the definitional equality of two dependent contexts, and the
definitional equality of two dependent elements of a dependent context.

Let us now recall some basic facts about categorical models of type theory. For
a more detailed treatment the reader could refer 
to~\cite{Jacobs1999Categorical,Pitts2000Categorical}, for example. If $\mathbb
T$ is a dependently typed calculus admitting each of the rules described above,
then we may construct from it a category $\C_\mathbb T$ known as the
\emph{classifying category} of $\mathbb T$. Its objects are contexts
$\Gamma$,~$\Delta$,~\dots, in $\mathbb T$, considered modulo definitional
equality (so we identify $\Gamma$ and $\Delta$ whenever $\Gamma = \Delta \
\ctxt$ is derivable); and its maps $\Gamma \to \Delta$ are \emph{context
morphisms}, which are judgements \mbox{$(\vec x \in \Gamma) \ \vec f (\vec x)
\in \Delta$} considered modulo definitional equality. The identity map on
$\Gamma$ is given by $(\vec x \in \Gamma) \ \vec x \in \Gamma$; whilst
composition is given by substitution of terms. Now, for any judgement $(\vec x
\in \Gamma)\ A(\vec x) \ \ty$ of $\mathbb T$, there is a distinguished context
morphism
\begin{equation*}
    (\vec x \in \Gamma\c y \in A(\vec x)) \to (\vec x \in \Gamma)
\end{equation*}
which sends $(\vec x, y)$ to $\vec x$. We call morphisms of $\C_\mathbb T$ of
this form \emph{basic dependent projections}. By a \emph{dependent projection},
we mean any composite of zero or more basic dependent projections. An important
property of dependent projections is that they are stable under pullback, in
the sense that for every $(\vec x \in \Gamma)\ A(\vec x) \ \ty$ and context
morphism $f \colon \Delta \to \Gamma$, we may show the square
\begin{equation*}
  \cd[@C+1em]{
    \big(\vec w \in \Delta\c y \in A(f(w))\big) \ar[d]_{p'} \ar[r] &
    \big(\vec x \in \Gamma\c y \in A(x)\big) \ar[d]^{p} \\
    \Delta \ar[r]_f &
    \Gamma
  }\ \text,
\end{equation*}
wherein the uppermost arrow sends $(w,y)$ to $(fw, y)$, to be a pullback in
$\C_\mathbb T$. Let us now recall from~\cite{Gambino2008identity} a second
class of maps in $\C_\mathbb T$ which will play an important role in this
paper. A context morphism $f \colon \Gamma \to \Delta$ is said to be an
\emph{injective equivalence} if it validates type-theoretic rules:
\begin{equation*}
\inferrule{(\vec y \in \Delta)\ \Lambda(\vec y) \ \ctxt\\
  (\vec x \in \Gamma)\ \vec d(\vec x) \in \Lambda(\vec f(\vec x))\\
  \vec b \in \Delta}
  {\mathrm E_{\vec d}(\vec b) \in \Lambda(\vec b)}
\end{equation*}
and
\begin{equation*}
\inferrule{(\vec y \in \Delta)\ \Lambda(\vec y) \ \ctxt\\
  (\vec x \in \Gamma)\ \vec d(\vec x) \in \Lambda(\vec f(\vec x))\\
  \vec a \in \Gamma}
  {\mathrm E_{\vec d}(\vec f(\vec a)) = \vec d(\vec a) \in \Lambda(\vec f(\vec a))}\ \text.
\end{equation*}
The name is motivated by the groupoid model of type theory, wherein the
injective equivalences are precisely the injective groupoid equivalences.
Intuitively, a morphism $f \colon \Gamma \to \Delta$ is an injective
equivalence just when every (dependent) function out of $\Delta$ is determined,
up to propositional equality, by its restriction to $\Gamma$. The leading
example of an injective equivalence is given by the context morphism $A \to
(x,y \in A, p \in \Id(x,y))$ sending $x$ to $(x, x, \r x)$. That this map is an
injective equivalence is precisely the content of the $\Id$-elimination and
computation rules. Diagramatically, a map $f$ is an injective equivalence if
for every commutative square of the form
\begin{equation*}
\cd{
  \Gamma \ar[d]_h \ar[r]^-d & (\Delta, \Lambda) \ar[d]^{p} \\
  \Delta \ar@{=}[r] & \Delta
}
\end{equation*}
with $p$ a dependent projection, we may find a diagonal filler $E_d \colon
\Delta \to (\Delta, \Lambda)$ making both induced triangles commute. By the
stability of dependent projections under pullback, this is equivalent with the
property that we should be able to find fillers for all commutative squares of
the form
\begin{equation}\label{commute}
\cd{
  \Gamma \ar[d]_h \ar[r]^-d & (\Phi, \Lambda) \ar[d]^{p} \\
  \Delta \ar[r]_k & \Phi
}
\end{equation}
again with $p$ a dependent projection. See~\cite[Section
5]{Gambino2008identity} for an elementary characterisation of the class of
injective equivalences.

\section{The main result}\label{sec3}
\subsection{An overview of the proof}\label{overview}
We are now ready to begin the proof of our main result: that if $\mathbb T$ is
a dependently typed calculus admitting each of the rules described in
Section~\ref{mltt}, then each type $A$ therein gives rise to a weak
$\omega$-groupoid whose objects are elements of $A$, and whose higher cells are
elements of the iterated identity types on $A$. In fact, we will be able to
prove a stronger result: that $A$ provides the ``type of objects'' for a weak
$\omega$-groupoid which is, in a suitable sense, \emph{internal} to $\mathbb
T$.

As explained in the Introduction, we will give our proof twice: once
informally, using a type-theoretic language, and once formally, using an
axiomatic categorical framework which captures just those aspects of the type
theory which allow the proof to go through. In this Section, we give the
informal proof. We shall concentrate in the first instance on constructing a
weak $\omega$-category, and defer the question of whether or not it is a weak
$\omega$-groupoid until the formal proof.

We begin by defining what we mean by a weak $\omega$-category internal to a
type theory $\mathbb T$. More specifically, given some globular operad $P$, we
define a notion of $P$-algebra internal to $\mathbb T$. The underlying data for
such a $P$-algebra is a \emph{globular context} $(\Delta)\ \Gamma \in \mathbb
T$; which is a sequence of judgements
\begin{align*}
& (\Delta)\ \Gamma_0 \ \ctxt\\
(\Delta, \vec x, \vec y \in \Gamma_0) \ & \Gamma_1(\vec x, \vec y) \ \ctxt\\
\big(\Delta, \vec x, \vec y \in \Gamma_0\c \vec p, \vec q \in \Gamma_1(\vec x,
\vec
y)\big) \ & \Gamma_2(\vec x, \vec y, \vec p, \vec q) \ \ctxt\\
& \vdots
\end{align*}
Just as globular sets have a coinductive characterisation, so too do globular
contexts: to give a globular context $(\Delta)\ \Gamma$ is to give a context
$(\Delta)\ \Gamma_0$ together with a globular context $(\Delta\c \vec x, \vec y
: \Gamma_0)\ \Gamma_{+1}(\vec x, \vec y)$. In order to define the operations
making a globular context $\Gamma$ (where henceforth we simplify the notation
by  omitting the precontext $\Delta$) into a $P$-algebra, we first define for
each pasting diagram $\pi \in (T1)_n$ the context $\Gamma^\pi$ consisting of
``$\pi$-indexed elements of $\Gamma$''. This is done by induction on $\pi$:
\begin{itemize}
\item If $\pi = \star$, then $\Gamma^\pi \defeq \Gamma_0$;
\item If $\pi = (\pi_1, \dots, \pi_k)$, then $\Gamma^\pi$ is the context
\begin{equation*}
  \big(\vec x_0, \dots, \vec x_k \in \Gamma_0 \c \vec y_1 \in \Gamma_{+1}(\vec x_0, \vec x_1)^{\pi_1}, \dots, \vec y_k \in \Gamma_{+1}(\vec x_{k-1}, \vec
  x_k)^{\pi_k}\big)\ \text.
\end{equation*}
\end{itemize}
For example, if $\pi$ is the pasting diagram~\eqref{shape}, then the context
$\Gamma^\pi$ is given by:
\begin{align*}
    \big(\ &\vec x_0, \vec x_1, \vec x_2 \in \Gamma_0\c\\
    &\ \vec s, \vec t \in \Gamma_{1}(\vec x_0, \vec x_1)\c \vec \alpha \in \Gamma_2(x_0, x_1, \vec s, \vec t)\c\\
    &\ \vec u, \vec v, \vec w \in \Gamma_{1}(\vec x_1, \vec x_2)\c \vec \beta \in \Gamma_2(x_1, x_2, \vec u, \vec
    v)\c \vec \gamma \in \Gamma_2(x_1, x_2, \vec v, \vec
    w)\ \big)
\end{align*}
whilst if $\pi \in (T1)_n$ is the element $\iota_n = (\cdots (\star) \cdots )$,
then $\Gamma^{\iota_n}$ is the context
\begin{equation*}
    \big(\vec x_0, \vec y_0 \in \Gamma_0\c \vec x_{1}, \vec y_{1} \in
    \Gamma_{1}(\vec x_0, \vec y_0)\c \dots\c \vec x_n \in
    \Gamma_n(\vec x_0, \vec y_0, \dots, \vec x_{n-1}, \vec y_{n-1})\big)
\end{equation*}
indexing the totality of the $n$-cells of $\Gamma$. Now to give a $P$-algebra
structure on the globular context $\Gamma$ will be to give, for every $\pi \in
(T1)_n$ and $\theta \in P_\pi$, a context morphism
\begin{equation*}
    [\theta] \colon \Gamma^\pi \to \Gamma^{\iota_n}
\end{equation*}
interpreting the operation $\theta$, subject to the following axioms. Firstly,
the interpretations should be compatible with source and target, which is to
say that diagrams of the form
\begin{equation*}
   \cd{
     \Gamma^\pi \ar[r]^{[\theta]} \ar[d]_{\sigma} &
     \Gamma^{\iota_n} \ar[d]^{\sigma} \\
     \Gamma^{\partial\pi} \ar[r]_{[s\theta]} &
     \Gamma^{\iota_{n-1}}
   } \qquad \text{and} \qquad
   \cd{
     \Gamma^\pi \ar[r]^{[\theta]} \ar[d]_{\tau} &
     \Gamma^{\iota_n} \ar[d]^{\tau} \\
     \Gamma^{\partial\pi} \ar[r]_{[t\theta]} &
     \Gamma^{\iota_{n-1}}
   }
\end{equation*}
should commute; here, $\sigma, \tau \colon \Gamma^\pi \to \Gamma^{\partial
\pi}$ are source and target projections defined by a further straightforward
induction over $\pi$. Secondly, the trivial pasting operations should have a
trivial interpretation; which is to say that
\begin{equation*}
  [\iota_n] = \id_{\Gamma^{\iota_n}} \colon \Gamma^{\iota_n} \to \Gamma^{\iota_n}\ \text.
\end{equation*}
Thirdly, the interpretation of a composite $[\theta \circ \psi]$ should be
``given by the composite of $[\theta]$ with $[\psi]$'', in the sense that the
following diagram commutes:
\begin{equation*}
\cd{
    \Gamma^{\pi \circ \phi} \ar[r]^{[\psi]} \ar[dr]_{[\theta \circ \psi]} &
    \Gamma^{\pi} \ar[d]^{[\theta]} \\
    & \Gamma^{\iota_n}
}\ \text.
\end{equation*}
This is not yet entirely formal, because we have not indicated how the map
$[\psi] \colon \Gamma^{\pi \circ \phi} \to \Gamma^{\pi}$ should be defined.
Intuitively, it is the morphism which applies simultaneously the
interpretations of the operations indexed by $\psi \colon \hat \pi \to P1$; but
it is not immediately clear how to make this precise. We will do so in
Section~\ref{proof1} below, using Michael Batanin's machinery of \emph{monoidal
globular categories}~\cite{Batanin1998Monoidal}. A general result from this
theory allows us to associate to the globular context $\Gamma$ a particular
globular operad $[\Gamma, \Gamma]$---the \emph{endomorphism operad} of
$\Gamma$---which is such that we may \emph{define} $P$-algebra structures on
$\Gamma$ to be globular operad morphisms $P \to [\Gamma, \Gamma]$. This
 operad $[\Gamma, \Gamma]$ has as operations of shape $\pi$, all
serially commutative diagrams
\begin{equation}\label{operation}
   \cd{
     \Gamma^\pi \ar[r]^{f_n} \ar@<3pt>[d]^{\tau} \ar@<-3pt>[d]_{\sigma} &
     \Gamma^{\iota_n} \ar@<3pt>[d]^{\tau} \ar@<-3pt>[d]_{\sigma} \\
     \Gamma^{\partial\pi} \ar@<3pt>[r]^{f_{n-1}} \ar@<-3pt>[r]_{g_{n-1}} \ar@<3pt>[d]^{\tau} \ar@<-3pt>[d]_{\sigma} &
     \Gamma^{\iota_{n-1}} \ar@<3pt>[d]^{\tau} \ar@<-3pt>[d]_{\sigma} \\
     \vdots \ar@<3pt>[d]^{\tau} \ar@<-3pt>[d]_{\sigma} & \vdots \ar@<3pt>[d]^{\tau}
     \ar@<-3pt>[d]_{\sigma} \\
     \Gamma^\star \ar@<3pt>[r]^{f_{0}} \ar@<-3pt>[r]_{g_{0}} & \Gamma^\star
   }
\end{equation}
of context morphisms. The source and target functions $[\Gamma, \Gamma]_\pi \to
[\Gamma, \Gamma]_{\partial \pi}$ send such a diagram to its subdiagram headed
by $f_{n-1}$, respectively $g_{n-1}$; the identity operation $\iota_n \in
[\Gamma, \Gamma]_{\iota_n}$ has each $f_i$ and $g_i$ given by an identity map;
whilst to describe substitution of operations in $[\Gamma, \Gamma]$ is
precisely the problem that we encountered above, and that which Batanin's
machinery solves. It is easy to see that a map of globular operads $P \to
[\Gamma, \Gamma]$ encodes exactly the structure of an internal $P$-algebra
sketched above.

We may now give a precise statement of the main result. Given a type theory
$\mathbb T$ admitting the rules of Section~\ref{mltt} and a type $A \in \mathbb
T$, we will construct a normalised, contractible, globular operad $P$ such that
the globular context $\underline A$ given by
\begin{align*}
& A \ \ctxt\\
(x, y \in A) \ & \Id_A(x, y) \ \ctxt\\
\big(x, y \in A\c  p, q \in \Id_A(x, y)\big) \ & \Id_{\Id_A(x,y)}(p, q) \ \ctxt\\
& \vdots
\end{align*}
admits an internal $P$-algebra structure. Now, it is straightforward to find an
operad for which $\underline A$ is an algebra---namely, the endomorphism operad
$[\underline A, \underline A]$, with algebra structure given by the identity
morphism $[\underline A, \underline A] \to [\underline A, \underline A]$---but
this does not help us, since there is no reason to expect this operad to be
either normalised or contractible. However, it comes rather close to being
contractible, in a sense which we will now explain. For $[\underline A,
\underline A]$ to be contractible would be for us to ask that, for every
serially commutative diagram
\begin{equation}\label{tocomplete}
   \cd{
     {\underline A}^\pi \ar@<3pt>[d]^{\tau} \ar@<-3pt>[d]_{\sigma} &
     {\underline A}^{\iota_n} \ar@<3pt>[d]^{\tau} \ar@<-3pt>[d]_{\sigma} \\
     {\underline A}^{\partial\pi} \ar@<3pt>[r]^{f_{n-1}} \ar@<-3pt>[r]_{g_{n-1}} \ar@<3pt>[d]^{\tau} \ar@<-3pt>[d]_{\sigma} &
     {\underline A}^{\iota_{n-1}} \ar@<3pt>[d]^{\tau} \ar@<-3pt>[d]_{\sigma} \\
     \vdots \ar@<3pt>[d]^{\tau} \ar@<-3pt>[d]_{\sigma} & \vdots \ar@<3pt>[d]^{\tau}
     \ar@<-3pt>[d]_{\sigma} \\
     {\underline A}^\star \ar@<3pt>[r]^{f_{0}} \ar@<-3pt>[r]_{g_{0}} & {\underline A}^\star
   }
\end{equation}
of context morphisms, we could find a map $\underline A^\pi \to \underline
A^{\iota_n}$ completing it to a diagram like~\eqref{operation}. Let us consider
in particular the case where $\pi$ is the pasting diagram of~\eqref{shape}.
Here, to give the data of~\eqref{tocomplete} is to give judgements
\begin{equation}\label{judgements1}
\begin{aligned}
    (x \in A)\ & f_0(x) \in A \\
    (x \in A)\ & g_0(x) \in A \\
    \big(x, y, z \in A\c p \in \Id(x,y), q \in \Id(y,z)\big)\ & f_1(x,y,z,p,q) \in \Id(f_0(x), g_0(z)) \\
    \big(x, y, z \in A\c p \in \Id(x,y), q \in \Id(y,z)\big)\ & g_1(x,y,z,p,q) \in \Id(f_0(x), g_0(z))
\end{aligned}
\end{equation}
whilst to give its completion $f_2 \colon \underline A^\pi \to \underline
A^{\iota_2}$ would be to give a judgement
\begin{align*}
    \big(\ &x, y, z \in A\c\\
    &\ s, t \in \Id(x, y)\c \alpha \in \Id(s,t)\c\\
    &\ u, v, w \in \Id(y,z)\c \beta \in \Id(u,v)\c \gamma \in \Id(v,w)\ \big) \\
    & \qquad f_2(x,y,z,s,t,\alpha,u,v,w,\beta,\gamma) \in \Id\big(f_1(x,y,z,s,u),
    g_1(x,y,z,t,w)\big)\ \text.
\end{align*}
We might attempt to obtain such a judgement by repeated application of the
identity type elimination rule. Indeed, by $\Id$-elimination on $\alpha$ it
suffices to consider the case where $s = t$ and $\alpha = \r(s)$; and by
$\Id$-elimination on $\gamma$ and $\beta$, it suffices to consider the case
where $u=v=w$ and $\gamma = \beta = \r(u)$. Thus it suffices to find a term
\begin{multline*}
    \big(x, y, z \in A\c s \in \Id(x, y)\c u \in \Id(y,z)\big)\\ f'_2(x,y,z,s,u) \in \Id\big(f_1(x,y,z,s,u), g_1(x,y,z,s,u)\big)\ \text.
\end{multline*}
But now by $\Id$-elimination on $s$ and on $u$, it suffices to consider the
case where $x = y = z$ and $s = u = \r(x)$; so that it even suffices to find a
term
\begin{equation}\label{fff}
    (x \in A)\ f''_2(x) \in \Id\big(f_1(x,x,x,\r x,\r x), g_1(x,x,x,\r x,\r x)\big)\ \text.
\end{equation}
Yet here we encounter the problem that $f_1$ and $g_1$, being arbitrarily
defined, need not agree at $(x,x,x,\r x, \r x)$, so that there is in general no
reason for a term like~\eqref{fff} to exist. However, there is a
straightforward way of removing this obstruction: we restrict attention to
those operations of shape $\pi$ which, when applied to a term consisting solely
of reflexivity proofs, yield another reflexivity proof. We may formalise this
as follows. For each $\pi \in (T1)_n$, we define, by induction on $\pi$, a
pointing $r_\pi \colon A \to \underline A^\pi$:
\begin{itemize}
\item If $\pi = \star$, then $r_\star \defeq \id \colon A \to A$;
\item If $\pi = (\pi_1, \dots, \pi_k)$, then $r_\pi$ is the context
    morphism
\begin{equation*}
  (x \in A)\ (\underbrace{x, \dots, x}_{{\text{$k$ times}}}, r_{\pi_1}(\r x), \dots, r_{\pi_k}(\r x)) \in \underline A^\pi\ \text.
\end{equation*}
\end{itemize}
In our example, if the judgements in~\eqref{judgements1} commuted with the
$A$-pointings, then we would have that $f_1(x,x,x,\r x, \r x) = g_1(x, x, x, \r
x, \r x) = \r (x) \in \Id(x, x)$, so that in~\eqref{fff} we could define
\begin{equation*}
    (x \in A)\ f''_2(x) \defeq \r (\r x) \in \Id(\r x, \r x)
\end{equation*}
and in this way obtain by repeated $\Id$-elimination the desired completion
$f_2 \colon \underline A^\pi \to \underline A^{\iota_2}$.
Motivated by this, we define the sub-operad $P \subset [\underline A,
\underline A]$ to have as its operations of shape $\pi$, those diagrams of the
form~\eqref{operation} in which each $f_i$ and $g_i$ commutes with the
$A$-pointings just defined. Again, it is intuitively clear that this defines a
sub-operad---which is to say that the operations with this property are closed
under identities and substitution---but to prove this requires a second
excursion into the theory of monoidal globular categories: one which for the
purposes of the present section, we omit. However, we claim further that $P$ is
both normalised and contractible. This will then prove our main result, since
the globular context $\underline A$ is a $P$-algebra---as witnessed by the map
of globular operads $P \hookrightarrow [\underline A, \underline A]$---so that
we will have shown the globular context $\underline A$ to be an algebra for a
normalised, contractible, globular operad $P$, and hence a weak
$\omega$-category.

Now, to show $P$ normalised is trivial, since its operations of shape $\star$
are those context morphisms $A \to A$ which commute with the pointing $\id_A
\colon A \to A$, and there is of course only one such. On the other hand, we
see that it is contractible through a generalisation of the argument given in
the example above. The only part requiring some thought is how to describe
generically the process of repeatedly applying $\Id$-elimination. The key to
doing this is to prove by induction on $\pi$ that each of the pointings $r_\pi
\colon A \to \underline A^\pi$ is an injective equivalence in the sense defined
in Section~\ref{mltt}. The injective equivalence structure now encodes the
process of repeated $\Id$-elimination. Using this, we may show $P$ contractible
as follows. Suppose we are given a diagram like~\eqref{tocomplete} where each
$f_i$ and $g_i$ commutes with the $A$-pointings. We let $B\underline
A^{\iota_n}$ denote the context obtained from $\underline A^{\iota_n}$ by
removing its final variable, and let $p \colon \underline A^{\iota_n} \to
B\underline A^{\iota_n}$ denote the corresponding dependent projection. Then we
have a commutative square
\begin{equation*}
\cd[@C+2em]{
  A \ar[d]_{r_\pi} \ar[r]^-{r_{\iota_n}} & \underline A^{\iota_n} \ar[d]^{p} \\
  \underline A^\pi \ar[r] & B\underline A^{\iota_n}
} \ \text,
\end{equation*}
where the lower arrow is obtained by applying first the projection $\underline
A^\pi \to B\underline A^\pi$, and then the maps $f_{n-1}$ and $g_{n-1}$.
Commutativity obtains by virtue of the fact that $f_{n-1}$ and $g_{n-1}$
commute with the pointings; and so, because $r_\pi$ is an injective equivalence
and $p$ a dependent projection, we can find a diagonal filler, which will be
the required map $f_n \colon \underline A^\pi \to \underline A^{\iota_n}$.

\subsection{An axiomatic framework}\label{axiomatic}
We now wish to make rigorous the above proof; and as we have already mentioned,
we shall do so not in an explicitly type-theoretic manner, but rather within an
axiomatic categorical framework. In this Section, we describe this framework
and give the intended type-theoretic interpretation.
\begin{Defn}
A category $\C$ is an \emph{identity type category} if it comes equipped with
two classes of maps $\I, \P \subset \mor \C$ satisfying the following axioms:
\begin{description}
\item[Empty] $\C$ has a terminal object $1$, and for all $A \in \C$, the
    unique map $A \to 1$ is a $\P$-map.
\item[Composition] The classes of $\P$-maps and $\I$-maps contain the
    identities and are closed under composition.
\item[Stability] Pullbacks of $\P$-maps along arbitrary maps exist, and are
    again $\P$-maps.
\item[Frobenius] The pullback of an $\I$-map along a $\P$-map is an
    $\I$-map.
\item[Orthogonality] For every commutative square
\begin{equation}\label{orthog}
\cd{
  A \ar[d]_i \ar[r]^-f & C \ar[d]^{p} \\
  B \ar[r]_g & D
}
\end{equation}
with $i \in \I$ and $p \in \P$, we can find a diagonal filler $j \colon B
\to C$ such that $ji = f$ and $pj = g$.
\item[Identities] For every $\P$-map $p \colon C \to D$, the diagonal map
    $\Delta \colon C \to C \times_D C$ has a factorisation
\begin{equation*}
  \Delta = C \xrightarrow{\quad r\quad } \Id(C) \xrightarrow{\quad e\quad } C \times_D C
\end{equation*}
where $r \in \I$ and $e \in \P$.
\end{description}
\end{Defn}
We make two remarks concerning this definition. Firstly, by (Empty) and
(Stability), any identity type category will have finite products, and product
projections will be $\P$-maps. Secondly, in order to verify (Orthogonality), it
suffices, by (Stability), to do so only in those cases where the map along the
bottom of~\eqref{orthog} is an identity.

\begin{Prop}
Let $\mathbb T$ be a dependent type theory admitting each of the inference
rules described in Section~\ref{mltt}. Then the classifying category
$\C_\mathbb T$ is an identity type category, where we take $\P$ to be the class
of dependent projections and $\I$ the class of injective equivalences.
\end{Prop}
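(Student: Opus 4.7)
The plan is to verify each of the six axioms for the classes $\P$ (dependent projections) and $\I$ (injective equivalences). The first three are essentially formal: \textbf{Empty}, since the empty context is terminal and every morphism to it is an iterated basic dependent projection; \textbf{Composition} for $\P$, since dependent projections are by definition closed under composition and contain the identities; and \textbf{Stability}, since basic dependent projections are stable under pullback, as recalled in Section~\ref{mltt}. That identities lie in $\I$ is immediate from the $E$-rule, taking $\vec E = \vec d$, while closure of $\I$ under composition follows by successive application of the diagonal filler characterization~\eqref{commute}. The \textbf{Orthogonality} axiom is then just the filler characterization itself, since any commutative square of the form~\eqref{orthog} may, by stability, be pulled back along $g$ to reduce to one of the form~\eqref{commute}.

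For \textbf{Identities}, we must factor the diagonal $C \to C \times_D C$ of a $\P$-map $p$. We first treat the case of a basic dependent projection $p \colon (\Gamma, x \in A) \to \Gamma$: the factorization is then $(\Gamma, x \in A) \xrightarrow{r} (\Gamma, x, y \in A, u \in \Id_A(x, y)) \xrightarrow{e} (\Gamma, x, y \in A)$, where $r$ is the reflexivity morphism $x \mapsto (x, x, \r x)$---an injective equivalence, as noted in Section~\ref{mltt}---and $e$ is the evident dependent projection. A general dependent projection is handled by iterating this construction over its presentation as a composite of basic ones, with successive identity types taken up to transport along earlier ones; the required transport is supplied by the extra contextual parameter appearing in the $\Id$-elim rule.

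The main technical obstacle is \textbf{Frobenius}: given $i \colon \Gamma \to \Delta$ in $\I$ and $p \colon \Phi \to \Delta$ in $\P$, we must show that the pullback $i' \colon \Gamma \times_\Delta \Phi \to \Phi$ again lies in $\I$. A filler problem for $i'$ against a dependent projection does not directly reduce to one for $i$, since the ``fibrewise'' data of $p$ cannot be stripped from the map $d$ supplied by the hypothesis. The remedy is again the extra contextual parameter in the $\Id$-elim rule, which, as in the treatment of the analogous weak factorisation system in~\cite{Gambino2008identity}, permits the injective equivalence property to be strengthened to accept such additional parameters; the strengthened form then furnishes the diagonal filler directly.
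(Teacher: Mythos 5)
Your verification proceeds axiom by axiom exactly as the paper's own proof does, and you have correctly isolated the two points of substance: (Identities) for composite dependent projections, which is the existence of identity \emph{contexts}---the paper delegates this to \cite[Proposition 3.3.1]{Garner2008Two-dimensional}, and your ``iterate the basic case with transport along the earlier identity proofs'' is precisely the construction carried out there---and (Frobenius), which both you and the paper delegate to \cite{Gambino2008identity}. The one place where your sketch glosses a real step is Frobenius: the extra contextual parameter in $\Id$\textsc{-elim} yields the parametrized (``strengthened'') lifting property only for the basic reflexivity maps $A \to (x, y \in A,\ p \in \Id_A(x,y))$, whereas (Frobenius) must be established for an \emph{arbitrary} $\I$-map, whose defining property is the unstrengthened one; without $\Pi$-types the strengthened property does not follow formally from the plain one. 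Promoting every injective equivalence to the parametrized property is exactly the content of \cite[Proposition 14]{Gambino2008identity}, which goes through the elementary characterisation of injective equivalences mentioned at the end of Section~\ref{mltt} rather than by direct appeal to $\Id$\textsc{-elim}. Since the paper itself disposes of this point with the same citation, your proposal matches its level of detail; just be aware that ``the injective equivalence property [is] strengthened to accept such additional parameters'' is a theorem about the whole class $\I$, not a reformulation of the elimination rule.
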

\begin{proof}
The empty context $(\ )$ provides a terminal object of $\C_\mathbb T$.
(Composition) is immediate from the definitions. (Stability) corresponds to the
possibility of performing type-theoretic substitution. (Frobenius) is shown to
hold in~\cite[Proposition 14]{Gambino2008identity}; it is a categorical
correlate of the fact that we allow an extra contextual parameter $\Delta$ in
the statement of the $\Id$-elimination rule. (Orthogonality) holds by the very
definition of injective equivalence, together with the remark made above.
Finally, (Identities) says something more than that identity types exist---it
says that identity \emph{contexts} exist: which is to say that, for every
dependent context $(\Delta) \ \Gamma \ \ctxt$, we may find a context $(\Delta\c
\vec x, \vec y \in \Gamma) \ \Id_{\Gamma}(\vec x, \vec y) \ \ctxt$ such that
the contextual analogues of the identity type rules are validated. That this is
possible is proven in~\cite[Proposition 3.3.1]{Garner2008Two-dimensional}.
\end{proof}
\noindent We will also require two stability properties of identity type
categories.
\begin{Prop}
Let $\C$ be an identity type category, and $X \in \C$. Then the coslice
category $X / \C$ is also an identity type category, where we take the class of
$\I$-maps (respectively, $\P$-maps) to consist of those morphisms which become
$\I$-maps (respectively, $\P$-maps) upon application of the forgetful functor
$X / \C \to \C$.
\end{Prop}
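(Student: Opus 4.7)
The plan is to verify each of the six axioms for $X/\C$, exploiting the fact that the forgetful functor $U \colon X/\C \to \C$ reflects both $\I$-maps and $\P$-maps by the very definition of these classes in the coslice. Several axioms follow almost immediately from the observation that $U$ creates connected limits: given a pullback diagram in $X/\C$, one forms the pullback of underlying maps in $\C$, and equips the apex $P$ with the unique map $X \to P$ induced from the pointings of the two factors (which agree after composing with the common leg of the cospan, since both the $\P$-map and the other leg are $X/\C$-morphisms). This settles (Empty), taking $(1, {!}_X)$ as the terminal object; (Composition), which is immediate; (Stability), since the resulting pullback projection is again a $\P$-map in $\C$; and (Frobenius), by the same token.

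The axioms requiring a little genuine verification are (Orthogonality) and (Identities). For (Orthogonality), given a commutative square in $X/\C$ with $i \in \I$ and $p \in \P$, I would apply (Orthogonality) in $\C$ to the underlying square to obtain a diagonal filler $j \colon B \to C$ satisfying $ji = f$ and $pj = g$. The key observation is that $j$ is automatically a morphism of $X/\C$: writing $f_A, f_B, f_C$ for the pointings from $X$, one computes $j \circ f_B = j \circ i \circ f_A = f \circ f_A = f_C$, using only that $i$ and $f$ are $X/\C$-morphisms and that $ji = f$. No uniqueness of filler is required.

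For (Identities), given a $\P$-map $p \colon (C, f_C) \to (D, f_D)$ in $X/\C$, the pullback $(C,f_C) \times_{(D,f_D)} (C,f_C)$ is computed as $(C \times_D C, \langle f_C, f_C \rangle)$, and the coslice-diagonal is carried by the underlying diagonal of $\C$. Applying (Identities) in $\C$ yields a factorisation $\Delta = e \circ r$ with $r \in \I$ and $e \in \P$; I would then equip the intermediate object $C^I$ with the pointing $r \circ f_C \colon X \to C^I$. The map $r$ is a morphism of $X/\C$ by construction, while $e \circ (r \circ f_C) = \Delta \circ f_C = \langle f_C, f_C \rangle$ shows that $e$ is too, and both inherit their $\I$- and $\P$-status from $\C$.

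If any step is the ``main obstacle,'' it is the small coherence check in (Orthogonality) that the $\C$-level filler automatically lies in the coslice; but as the calculation above shows, this is entirely formal, and the remainder of the proof reduces to routine bookkeeping around the forgetful functor creating connected limits and reflecting both distinguished classes of maps.
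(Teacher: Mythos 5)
Your proposal is correct and matches the paper, which simply declares these proofs trivial and offers no details; your verification of the six axioms --- in particular the observation that the diagonal filler for (Orthogonality) is automatically a coslice morphism via $j \circ f_B = j\, i\, f_A = f\, f_A = f_C$, and the pointing $r \circ f_C$ on $C^I$ for (Identities) --- is exactly the routine bookkeeping the authors had in mind.
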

\begin{Prop}
Let $\C$ be an identity type category, and $X \in \C$. Then the category
$\C_X$, whose objects are $\P$-maps $A \to X$ and whose morphisms are
commutative triangles, is also an identity type category, where we define the
classes of $\I$-maps and $\P$-maps in a manner analogous to that of the
previous Proposition.
\end{Prop}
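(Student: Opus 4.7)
The plan is to verify each of the six axioms for $\C_X$ by reducing to the corresponding axiom of $\C$. By definition, a morphism of $\C_X$ lies in $\I$ or $\P$ iff its underlying morphism in $\C$ does, so the forgetful functor $U \colon \C_X \to \C$ reflects these classes. The strategy is then to check that $U$ creates the terminal object, the relevant pullbacks, and the identity-type factorisations demanded by the axioms.

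Several axioms then fall out immediately. (Composition) is trivial since composition and identities in $\C_X$ agree with those in $\C$. For (Empty), I would take the terminal object of $\C_X$ to be $\id_X \colon X \to X$, with unique map from $(p \colon A \to X)$ being $p$ itself---a $\P$-map by assumption. For (Orthogonality), a square in $\C_X$ with $i \in \I$ and $p \in \P$ yields, by orthogonality in $\C$, a filler $j$; writing $b, c, d$ for the structure maps of $B, C, D$, we have $c = d \circ p$ and $b = d \circ g$ (where $g$ is the bottom edge), whence $c \circ j = d \circ p \circ j = d \circ g = b$, so $j$ lies over $X$ and is therefore a morphism of $\C_X$. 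For (Stability) and (Frobenius), given morphisms $f, g$ into $B$ in $\C_X$ with $f$ a $\P$-map, I would form the pullback $A \times_B C$ in $\C$ and equip it with the structure map $A \times_B C \to B \to X$, a composite of $\P$-maps; this is readily seen to be the pullback in $\C_X$, and stability and Frobenius follow from their analogues in $\C$ since both $\I$ and $\P$ are reflected and preserved by $U$.

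The only substantive verification is (Identities). Given a $\P$-map $p \colon (C \to X) \to (D \to X)$ in $\C_X$, the pullback $C \times_D C$ in $\C$ is also the pullback in $\C_X$ by the preceding paragraph, and the diagonal $\Delta \colon C \to C \times_D C$ coincides in both categories. I would factor $\Delta = e \circ r$ in $\C$ with $r \in \I$ and $e \in \P$, and endow $C^I$ with the $X$-structure $C^I \xrightarrow{e} C \times_D C \xrightarrow{\pi_1} C \to X$, a composite of $\P$-maps (the middle one being a $\P$-map as the pullback of $p$ along itself). Then $e$ is a morphism of $\C_X$ by construction, while $r$ is one because $\pi_1 \circ e \circ r = \pi_1 \circ \Delta = \id_C$, so both $e \circ r$ followed by the structure map of $C \times_D C$ and the structure map of $C$ agree. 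The main obstacle is really only the bookkeeping of structure maps over $X$; there is no new mathematical content beyond what the identity type category axioms for $\C$ already provide.
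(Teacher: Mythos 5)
Your proof is correct and is exactly the routine verification the paper has in mind: the paper simply declares these proofs trivial, noting only that one must take the objects of $\C_X$ to be $\P$-maps into $X$ (rather than the full slice $\C/X$) so that (Empty) holds, which is precisely your terminal-object observation. One small point of bookkeeping: in (Stability) and (Identities) the structure map of the pullback $A \times_B C$ is most directly seen to be a $\P$-map as $c \circ \pi_C$ (the projection onto $C$, which is a $\P$-map by (Stability) in $\C$ as the pullback of the $\P$-map $A \to B$, followed by the structure map $c \colon C \to X$), since the canonical map $A \times_B C \to B$ you factor through need not itself lie in $\P$.
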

\noindent The proofs are trivial; the only point of note is that, in the second
instance, we could not take $\C_X$ to be the full slice category $\C/X$, as
then (Empty) would not be satisfied.

\subsection{Internal weak $\omega$-groupoids}\label{proof1}
In this section, we describe the notion of weak $\omega$-groupoid internal to
an identity type category $\C$. We begin by defining internal $P$-algebras for
a globular operad $P$.
\begin{Defn}\label{globctxt}
A \emph{pre-globular context} in $\C$ is a diagram
\begin{equation*}
    \cd[@C-0.5em]{
    \Gamma_0 & \Gamma_1 \ar@<3pt>[l]^-{t} \ar@<-3pt>[l]_-{s} & \Gamma_2 \ar@<3pt>[l]^-{t} \ar@<-3pt>[l]_-{s} & \Gamma_3\ar@<3pt>[l]^-{t} \ar@<-3pt>[l]_-{s} & \,\cdots \ar@<3pt>[l]^-{t} \ar@<-3pt>[l]_-{s}}
\end{equation*}
satisfying the globularity equations $ss = st$ and $ts = tt$. A pre-globular
context is a \emph{globular context} if, for each $n \geqslant 1$, the map
\begin{equation}\label{st}
    (s, t) \colon \Gamma_n \to B_n\Gamma
\end{equation}
is a $\P$-map, where $B_n \Gamma$ is defined as follows. We have $B_1 \Gamma
\defeq \Gamma_0 \times \Gamma_0$, and have $B_{n+1} \Gamma$ given by the pullback
\begin{equation}\label{boundary}
    \cd{
        B_{n+1} \Gamma \ar[r]^{} \ar[d] & \Gamma_{n} \ar[d]^{(s,t)} \\
        \Gamma_{n} \ar[r]_-{(s,t)} & B_{n}\Gamma\ \text.
    }
\end{equation}
\end{Defn}
Observe that requiring~\eqref{st} to be a $\P$-map for $n = 1$ ensures the
existence of the pullback~\eqref{boundary} defining $B_2\Gamma$; which in turn
allows us to require that~\eqref{st} should be a $\P$-map for $n = 2$, and so
on. Once again, we have a coinductive characterisation of globular contexts: to
give a globular context $\Gamma \in \C$ is to give an object $\Gamma_0$
together with a globular context $\Gamma_{+1} \in \C_{\Gamma_0 \times
\Gamma_0}$.
%
%
%
%
%
%

The first step in defining $P$-algebra structure on a globular context $\Gamma$
is to describe the object $\Gamma^\pi$ of ``$\pi$-indexed elements of
$\Gamma$''.
\begin{Defn}\label{idxd}
Let $\Gamma$ be a globular context in $\C$, and let $\pi \in (T1)_n$. We define
the object $\Gamma^\pi \in \C$ by the following induction:
\begin{itemize}
\item If $\pi = \star$ then $\Gamma^\pi \defeq \Gamma_0$.
\item If $\pi = (\pi_1, \dots, \pi_k)$, then we first form the objects
    $(\Gamma_{+1})^{\pi_1}$, \dots, $(\Gamma_{+1})^{\pi_k}$ of
    $\C_{\Gamma_0 \times \Gamma_0}$. This yields a diagram
\begin{equation*}
    \cd[@-1em]{
        & (\Gamma_{+1})^{\pi_1} \ar[dl]_s \ar[dr]^t & & \ \ \ \cdots \ \ \ \ar[dl]_s \ar[dr]^t & & (\Gamma_{+1})^{\pi_k} \ar[dl]_s
        \ar[dr]^t \\
        \Gamma_0 & & \Gamma_0 & & \Gamma_0 & & \Gamma_0
    }
\end{equation*}
in $\C$. Note that each $s$ and $t$ is a $\P$-map so that this diagram has
a limit, which we define to be $\Gamma^\pi$.
\end{itemize}
We define maps $\sigma, \tau \colon \Gamma^{\pi} \to \Gamma^{\partial \pi}$ by
a further induction:
\begin{itemize}
\item For $\pi \in (T1)_1$, we have $\Gamma^\pi$ given by the limit of a
    diagram
\begin{equation*}
    \cd[@-1em]{
        & \Gamma_1 \ar[dl]_s \ar[dr]^t & & \ \cdots \ \ar[dl]_s \ar[dr]^t & & \Gamma_1 \ar[dl]_s
        \ar[dr]^t \\
        \Gamma_0 & & \Gamma_0 & & \Gamma_0 & & \Gamma_0
    }\ \text;
\end{equation*}
and so we may take $\sigma, \tau \colon \Gamma^\pi \to \Gamma^\star =
\Gamma_0$ to be given by the projections from this limit into the leftmost,
respectively rightmost, copy of $\Gamma_0$.
\item Otherwise, given $\pi = (\pi_1, \dots, \pi_k)$, we first construct
    the morphisms $\sigma, \tau \colon (\Gamma_{+1})^{\pi_i} \to
    (\Gamma_{+1})^{\partial\pi_i}$. These give rise to a diagram
\begin{equation*}
    \cd[@-1em@R+1em]{
        & (\Gamma_{+1})^{\pi_1} \ar[ddl]_s \ar[ddr]^t \ar[d]^\sigma & & \ \ \ \cdots \ \ \ \ar[d]^\sigma \ar[ddl]_s \ar[ddr]^t & & (\Gamma_{+1})^{\pi_k} \ar[ddl]_s
        \ar[ddr]^t \ar[d]^\sigma \\
        & (\Gamma_{+1})^{\partial\pi_1} \ar[dl]_s \ar[dr]^t & & \ \ \ \cdots \ \ \ \ar[dl]_s \ar[dr]^t & & (\Gamma_{+1})^{\partial\pi_k} \ar[dl]_s
        \ar[dr]^t \\
        \Gamma_0 & & \Gamma_0 & & \Gamma_0 & & \Gamma_0
    }
\end{equation*}
and correspondingly for $\tau$. We now take $\sigma, \tau \colon \Gamma^\pi
\to \Gamma^{\partial \pi}$ to be the induced maps from the limit of the
upper subdiagram (which is $\Gamma^\pi$) to the limit of the lower one
(which is $\Gamma^{\partial \pi}$).
\end{itemize}
\end{Defn}
\begin{Prop}\label{endo}
Let $\Gamma \in \C$ be a globular context. Then there is a globular operad
$[\Gamma, \Gamma]$ whose set of operations of shape $\pi$ comprises all
serially commutative diagrams of the form~\eqref{operation}.
\end{Prop}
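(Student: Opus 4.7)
The plan is to construct $[\Gamma,\Gamma]$ by following Batanin's general recipe for endomorphism operads, adapted to work in an identity type category $\C$ rather than in a topos. The crucial preparatory step is to extend the assignment $\pi \mapsto \Gamma^\pi$ of Definition~\ref{idxd} to a richer functorial construction: given any $\pi \in (T1)_n$ and any labelling $\phi \colon \hat \pi \to T1$ of the cells of $\hat \pi$ by pasting diagrams, one produces an object $\Gamma^{\phi \circ \pi}$ together with a canonical morphism $\Gamma^{\phi \circ \pi} \to \Gamma^\pi$, which expresses ``substituting labelled pasting diagrams into $\pi$'' at the level of $\Gamma$. This extension is built by the same inductive pattern as Definition~\ref{idxd}, once again relying on the fact that each $(s,t) \colon \Gamma_n \to B_n \Gamma$ is a $\P$-map, so that the required pullbacks exist by (Stability).

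With this in hand, the operad structure on $[\Gamma,\Gamma]$ can be described as follows. Source and target maps $[\Gamma,\Gamma]_\pi \to [\Gamma,\Gamma]_{\partial \pi}$ are defined by truncating a diagram of the form~\eqref{operation} to the sub-diagram headed by $f_{n-1}$, respectively $g_{n-1}$. The identity $\iota_n \in [\Gamma,\Gamma]_{\iota_n}$ is the diagram in which every $f_i$ and $g_i$ is the identity map on $\Gamma^{\iota_i}$. Substitution takes a pair $(\theta, \psi)$ consisting of $\theta \in [\Gamma,\Gamma]_\pi$ and an assignment $\psi$ sending each cell of $\hat \pi$ to an operation of $[\Gamma,\Gamma]$, with augmentation $\phi \defeq \rho_1 \psi \colon \hat \pi \to T1$, and returns the operation of shape $\phi \circ \pi$ whose top map is the composite
\begin{equation*}
    \Gamma^{\phi \circ \pi} \xrightarrow{\ [\psi]\ } \Gamma^\pi \xrightarrow{\ f_n\ } \Gamma^{\iota_n},
\end{equation*}
where $[\psi]$ is built from the functorial construction of the previous paragraph and $f_n$ is the top row of $\theta$; the lower rows are obtained inductively by the same recipe applied to the sources and targets of $\theta$ and $\psi$.

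The principal obstacle is precisely the construction and coherence of the map $[\psi] \colon \Gamma^{\phi \circ \pi} \to \Gamma^\pi$: one must show that these maps are compatible with $\sigma$ and $\tau$, that they are unital and associative with respect to the substitution of labellings in $T1$, and hence that the induced substitution on $[\Gamma,\Gamma]$ satisfies the operad axioms. This is exactly the work carried out abstractly by Batanin's machinery of monoidal globular categories, and for us reduces to a (lengthy but routine) diagram chase using the universal properties of the limits defining $\Gamma^\pi$. Granted this, the augmentation $[\Gamma,\Gamma] \to T$ sending an operation of shape $\pi$ to $\pi$ itself is cartesian by construction, since the operations of shape $\pi$ form precisely the fibre over $\pi \in T1$, so that $[\Gamma,\Gamma]$ is indeed a globular operad.
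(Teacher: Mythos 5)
Your proposal correctly locates the crux of the matter---the construction of the substitution maps $[\psi] \colon \Gamma^{\phi \circ \pi} \to \Gamma^{\pi}$ and the verification that they are compatible with $\sigma$ and $\tau$, unital and associative---but it then disposes of exactly this crux by declaring it ``a (lengthy but routine) diagram chase'' that is ``exactly the work carried out abstractly by Batanin's machinery''. That is where the gap lies. Batanin's endomorphism-operad theorem (Proposition~\ref{batanin}, i.e.\ Proposition~7.2 of~\cite{Batanin1998Monoidal}) applies to a \emph{globular object in a monoidal globular category}; to invoke it you must first manufacture such a monoidal globular category out of the identity type category $\C$ and realise $\Gamma$ as a globular object in it. The paper's proof consists precisely of this bridge: it defines $\E(\C)_n$ to be the full subcategory of $\C^{\mathbb S_n}$ on those $n$-spans all of whose legs are $\P$-maps (so that the tensors $\otimes_k$, being pullbacks, exist by (Stability)), observes that a globular context $\Gamma$ yields a globular object $X_\Gamma$ whose $n$-th component is the span headed by $\Gamma_n$, and proves by induction on $\pi$ that $(X_\Gamma)^{\otimes \pi}$ is the span headed by $\Gamma^\pi$, so that $\E(\C)_n\big((X_\Gamma)^{\otimes\pi},(X_\Gamma)_n\big)$ is exactly the set of diagrams~\eqref{operation}. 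With that identification, $[\Gamma,\Gamma] \defeq [X_\Gamma,X_\Gamma]$ and all the coherence you deferred is supplied by the cited theorem. Your proposal contains none of this, and without it the appeal to Batanin does not yet constitute a proof.

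Moreover, the deferred verification is not routine if attempted directly. The iterated limits defining $\Gamma^{\phi\circ\pi}$ are determined only up to canonical isomorphism, so associativity and interchange for the maps $[\psi]$ hold a priori only up to the coherence isomorphisms of a monoidal globular category; showing that one nevertheless obtains a \emph{strict} globular operad is the substantive content of Batanin's result, not a formality. Likewise, your closing remark that the augmentation to $T$ is ``cartesian by construction'' presupposes that $[\Gamma,\Gamma]$ has already been assembled into a monad on globular sets with associative, unital substitution---which is again exactly the deferred part. To repair the proposal, either carry out the construction of $\E(\C)$ and the identification of $(X_\Gamma)^{\otimes\pi}$ as above, or commit to the direct verification in full, including the treatment of the canonical isomorphisms between iterated limits.
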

We will prove this Proposition using Michael Batanin's theory of \emph{monoidal
globular categories}~\cite{Batanin1998Monoidal}. The notion of monoidal
globular category bears the same relationship to that of strict
$\omega$-category as the notion of monoidal category does to that of monoid; in
both cases, the former notion is obtained from the latter by replacing
everywhere sets with categories, functions with functors, and equalities with
coherent natural isomorphisms.
\begin{Defn}
A \emph{monoidal globular category} $\E$ is given by a sequence of categories
and functors
\begin{equation*}
    \cd[@C-0.5em]{
    \E_0 & \E_1 \ar@<3pt>[l]^-{T} \ar@<-3pt>[l]_-{S} & \E_2 \ar@<3pt>[l]^-{T} \ar@<-3pt>[l]_-{S} & \E_3\ar@<3pt>[l]^-{T} \ar@<-3pt>[l]_-{S} & \,\cdots \ar@<3pt>[l]^-{T} \ar@<-3pt>[l]_-{S}}
\end{equation*}
satisfying the globularity equations $SS = ST$ and $TS = TT$, together with,
for each natural number $n$, an identities functor
\begin{equation*}
    Z \colon \E_n \to \E_{n+1}
\end{equation*}
and for each pair of natural numbers $0 \leqslant k < n$, a composition functor
\begin{equation*}
    \otimes_k \colon \E_n \times_k \E_n \to \E_n
\end{equation*}
where $\E_n \times_k \E_n$ denotes the pullback
\begin{equation*}
    \cd{
        \E_n \times_k \E_n \ar[r] \ar[d] &
        \E_n \ar[d]^{S^{n-k}} \\
        \E_n \ar[r]_{T^{n-k}} & \E_{k} \ \text.
    }
\end{equation*}
In addition, there are given invertible natural transformations witnessing:
\begin{itemize}
\item Associativity:
\begin{equation*}
    \alpha_{n,k} \colon A \otimes_k (B \otimes_k C) \cong (A \otimes_k B)
    \otimes_k C
\end{equation*}
\item Unitality:
\begin{equation*}
    \lambda_{n} \colon Z^{n-k}T^{n-k} A \otimes_k A \cong A \quad \text{and}
    \quad
    \rho_{n} \colon A \otimes_k Z^{n-k}S^{n-k} A  \cong A
\end{equation*}
\item Interchange:
\begin{equation*}
    \chi_{n,k,l} \colon (A \otimes_k B) \otimes_l (C \otimes_k D) \cong
(A \otimes_l C) \otimes_k (B \otimes_l D) \quad \text{(for $k < l$).}
\end{equation*}
\end{itemize}
These data are required to satisfy a number of coherence axioms, which the
reader may find in~\cite[Definition 2.3]{Batanin1998Monoidal}.
\end{Defn}
Just as monoidal categories provide a general environment within which we can
speak of monoids, so monoidal globular categories provide a general environment
within which we can speak of algebras for a globular operad. The underlying
data for an algebra in this general setting is given as follows:
\begin{Defn}
A \emph{globular object} $X$ in a monoidal globular category $\E$ is given by a
sequence of objects $X_i \in \E_i$, one for each natural number $i$, such that
$S(X_{i+1}) = T(X_{i+1}) = X_i$ for all $i$.
\end{Defn}
To describe the additional structure required to make a globular object into a
$P$-algebra, we employ one of the central constructions
of~\cite{Batanin1998Monoidal}. This associates to each globular object $X \in
\E$ an endomorphism operad $[X, X]$; which allows us to define a $P$-algebra in
$\E$ to be a globular object $X$ together with a globular operad morphism $P
\to [X, X]$. We now describe the construction of $[X, X]$. First observe that
if $\E$ is a monoidal globular category, then so too is $\E_{+1}$, where
$(\E_{+1})_n = \E_{n+1}$ and the remaining data is defined in the obvious way.
Moreover, if $X$ is a globular object in $\E$, then $X_{+1}$ is a globular
object in $\E_{+1}$, where again we define $(X_{+1})_n = X_{n+1}$. Now, given a
globular object $X \in \E$ and a pasting diagram $\pi \in (T1)_n$, we define by
induction on $\pi$ an object $X^{\otimes \pi} \in \E_n$:
\begin{itemize}
\item If $\pi = \star$, then $X^{\otimes \pi} \defeq X_0 \in \E_0$;
\item If $\pi = (\pi_1, \dots, \pi_k)$, then $X^{\otimes\pi} \defeq
    (X_{+1})^{\otimes \pi_1} \otimes_0 \cdots \otimes_0 (X_{+1})^{\otimes
    \pi_k}$.
\end{itemize}
\begin{Prop}\label{batanin}
Let $\E$ be a monoidal globular category and $X \in \E$ a globular object. Then
there is a globular operad $[X, X]$ with
\begin{equation*}
    [X, X]_\pi \defeq \E_n(X^{\otimes \pi}, X_n) \qquad \text{for all $\pi \in
    (T1)_n$\text.}
\end{equation*}
\end{Prop}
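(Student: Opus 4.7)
The plan is to construct the globular operad $[X,X]$ by specifying its operation sets, source/target maps, unit, and substitution, and then verifying the globular operad axioms. By the description \eqref{Pdesc}, giving a globular operad amounts to giving the arity-indexed sets $P_\pi$ together with source, target, unit, and substitution maps obeying the usual operadic axioms together with appropriate globularity and compatibility. We therefore take $[X,X]_\pi \defeq \E_n(X^{\otimes \pi}, X_n)$ and build the remaining data from the monoidal globular structure on $\E$.

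The first step is a technical lemma: by induction on $\pi \in (T1)_n$, we show that $S(X^{\otimes \pi}) = X^{\otimes \partial \pi} = T(X^{\otimes \pi})$. The base case uses the globularity equations for the globular object $X$; the inductive step uses the fact that in a monoidal globular category, $S$ and $T$ interact with the composition functors $\otimes_k$ so as to send a tensor to the tensor of sources and targets (which matches the inductive definition of $\partial$). With this established, we define $s, t \colon [X,X]_\pi \to [X,X]_{\partial \pi}$ by applying the functors $S, T \colon \E_n \to \E_{n-1}$ to morphisms; the globularity equations $ss = st$ and $ts = tt$ for the operad descend directly from those in $\E$.

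For the unit, a further induction---invoking the iterated unit constraints $\lambda$ and $\rho$---identifies $X^{\otimes \iota_n}$ with $X_n$, and we take the unit operation $\iota_n \in [X,X]_{\iota_n}$ to be $\id_{X_n}$ under this identification. For substitution, we are given $\theta \colon X^{\otimes \pi} \to X_n$ in $[X,X]_\pi$ together with a labelling $\psi \colon \hat \pi \to [X,X]1$ assigning to each $k$-cell $c$ of $\hat \pi$ an operation $\psi_c \colon X^{\otimes \sigma_c} \to X_k$; by induction on $\pi$, using the composition functors $\otimes_k$ of $\E$ to combine the $\psi_c$'s, we build a morphism $\psi_\ast \colon X^{\otimes (\phi \circ \pi)} \to X^{\otimes \pi}$ (where $\phi = \rho_1 \psi$), and then set $\theta \circ \psi \defeq \theta \circ \psi_\ast$. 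Compatibility of substitution with source and target follows from functoriality of $S$ and $T$ and the first-step lemma.

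The main obstacle is the verification of the associativity and unitality of substitution, together with the fact that substitution is well-defined in spite of the various choices implicit in iterating the tensor products $\otimes_k$. At the level of objects of $\E$, the operations $\otimes_k$ are associative only up to the isomorphisms $\alpha_{n,k}$, unital only up to $\lambda_n$ and $\rho_n$, and compatible only up to the interchangers $\chi_{n,k,l}$; nevertheless, these isomorphisms get absorbed into composition at the level of hom-sets, so that the required equalities of operations can be deduced from the coherence axioms for a monoidal globular category. Making this rigorous is a careful but routine induction, and the construction is in substance that carried out in~\cite[\S8]{Batanin1998Monoidal}, to which we defer for the full details.
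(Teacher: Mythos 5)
Your proposal is correct and in substance takes the same route as the paper, whose entire proof is the citation ``This is Proposition~7.2 of~\cite{Batanin1998Monoidal}''; your sketch of the underlying construction (sources and targets induced by $S$ and $T$ via the lemma $S(X^{\otimes\pi}) = X^{\otimes\partial\pi} = T(X^{\otimes\pi})$, the unit given by $\id_{X_n}$ under the identification $X^{\otimes\iota_n} = X_n$, and substitution built from the tensor functors up to the coherence isomorphisms) is an accurate outline of what Batanin's result provides, and like the paper you ultimately defer to him for the coherence verifications.
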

\begin{proof}
This is Proposition~7.2 of~\cite{Batanin1998Monoidal}.
\end{proof}
We now use this result to prove Proposition~\ref{endo}. The first step is to
construct, from our identity type category $\C$, a monoidal globular category
$\E(\C)$.
\begin{Defn}
Let $\mathbb G$ denote the category
\begin{equation*}
    \cd[@C-0.5em]{
    0 & 1 \ar@<3pt>[l]^-{\tau} \ar@<-3pt>[l]_-{\sigma} & 2\ar@<3pt>[l]^-{\tau} \ar@<-3pt>[l]_-{\sigma} & \,\cdots \ar@<3pt>[l]^-{\tau} \ar@<-3pt>[l]_-{\sigma} }
\end{equation*}
The \emph{generic $n$-span} $\mathbb S_n$ is defined to be the coslice category
$n / \mathbb G$. In low dimensions, we have that:
\begin{equation*}
    \mathbb S_0 = \bullet \quad \text, \quad \mathbb S_1 = \cd{ & \bullet \ar[dl] \ar[dr]
    \\ \bullet && \bullet} \quad \text, \quad \mathbb S_2 = \cd{ & \bullet \ar[dl] \ar[dr]
    \\ \bullet \ar[drr] \ar[d] && \bullet \ar[dll] \ar[d] \\
    \bullet && \bullet} \quad \text, \quad \dots
\end{equation*}
The monoidal globular category $\E(\C)$ is defined by taking $\E(\C)_n$ to be
the full subcategory of the functor category $\C^{\mathbb S_n}$ on those
functors which send every morphism of $\mathbb S_n$ to a $\P$-map. The
remaining structure of $\E(\C)$ may be found described in~\cite[Definition
3.2]{Batanin1998Monoidal}. As a representative sample, we describe on objects
the functor $Z \colon \E(\C)_1 \to \E(\C)_2$, which is given by
\begin{equation*}
\cd{ & C \ar[dl]_f \ar[dr]^g
    \\ A && B} \qquad \mapsto \qquad \cd{ & C \ar[dl]_{1_C} \ar[dr]^{1_C}
    \\ C \ar[drr]^(0.3)g \ar[d]_f && C \ar[dll]_(0.3)f \ar[d]^g \\
    A && B}\ \text;\end{equation*}
and the functor $\otimes_0 \colon \E(\C)_2 \times_0 \E(\C)_2 \to \E(\C)_2$,
which sends the object
\begin{equation*}
    \big( \quad \cd{ & H \ar[dl]_{m} \ar[dr]^{n}
    \\ D \ar[drr]^(0.3)g \ar[d]_f && E \ar[dll]_(0.3)h \ar[d]^k \\
    A && B} \qquad \text, \qquad
    \cd{ & K \ar[dl]_{u} \ar[dr]^{v}
    \\ F \ar[drr]^(0.3)q \ar[d]_r && G \ar[dll]_(0.3)s \ar[d]^t \\
    B && C} \quad \big)
\end{equation*}
of $\E(\C)_2 \times_0 \E(\C)_2$ to the object
\begin{equation*}
    \cd{ & H \times_B K \ar[dl]_{m \times_B u} \ar[dr]^{n \times_B v}
    \\ D \times_B F \ar[drr]^(0.3){q\pi_2} \ar[d]_{f \pi_1} && E \times_B G \ar[dll]_(0.3){h \pi_1} \ar[d]^{t\pi_2} \\
    A && C}
\end{equation*}
of $\E(\C)_2$. Note that the requisite pullbacks exist by virtue of the
requirement that every arrow in the above diagrams should be a $\P$-map.
%
\end{Defn}
We next observe that, if $\Gamma$ is a globular context in $\C$, then there is
an associated globular object $X_\Gamma \in \E(\C)$ where $(X_\Gamma)_n$ is the
$n$-span
\begin{equation*}
    \cd{ & \Gamma_n \ar[dl]_{s} \ar[dr]^{t}
    \\ \Gamma_{n-1} \ar[drr]^(0.3){t} \ar[d]_{s} && \Gamma_{n-1} \ar[dll]_(0.3){s} \ar[d]^{t} \\
    {\vdots} \ar[drr]^(0.3){t} \ar[d]_{s} & & {\vdots} \ar[dll]_(0.3){s} \ar[d]^{t} \\
    \Gamma_0 && \Gamma_0}\ \text.
\end{equation*}
By a straightforward induction on $\pi$, we may now prove that for any $\pi \in
(T1)_n$, $(X_\Gamma)^{\otimes \pi} \in \E_n$ is given by the $n$-span
\begin{equation*}
    \cd{ & \Gamma^{\pi} \ar[dl]_{\sigma} \ar[dr]^{\tau}
    \\ \Gamma^{\partial \pi} \ar[drr]^(0.3){\tau} \ar[d]_{\sigma} && \Gamma^{\partial \pi} \ar[dll]_(0.3){\sigma} \ar[d]^{\tau} \\
    {\vdots} \ar[drr]^(0.3){\tau} \ar[d]_{\sigma} & & {\vdots} \ar[dll]_(0.3){\sigma} \ar[d]^{\tau} \\
    \Gamma^{\star} && \Gamma^\star}\ \text;
\end{equation*}
from which it follows that the hom-set $\E(\C)_n\big((X_\Gamma)^{\otimes \pi},
(X_\Gamma)_n\big)$ is precisely the set of commutative diagrams of the
form~\eqref{operation}. This allows us to complete the proof of
Proposition~\ref{endo}: indeed, we may take the globular operad $[\Gamma,
\Gamma]$ whose existence is asserted there to be the globular operad
$[X_\Gamma, X_\Gamma]$ whose existence is assured by Proposition~\ref{batanin}.

\begin{Defn}\label{palg}
Let $\C$ be an identity type category. An \emph{internal $P$-algebra} for a
globular operad $P$ is a pair $(\Gamma, f)$, where $\Gamma$ is a globular
context in $\C$ and $f \colon P \to [\Gamma, \Gamma]$ a map of globular
operads. By a \emph{weak $\omega$-category} in $\C$, we mean a triple $(P,
\Gamma, f)$, where $P$ is a normalised, contractible globular operad and
$(\Gamma, f)$ an internal algebra for it.
\end{Defn}
It remains to extend this definition to one of weak $\omega$-groupoid in $\C$.
To do this, we exploit the characterisation of weak $\omega$-groupoids given by
Proposition~\ref{characterisation}.
\begin{Defn}\label{chduals}
Let $f \colon P \to [\Gamma, \Gamma]$ be a weak $\omega$-category in the
identity type category $\C$. Now a \emph{choice of duals} for $\Gamma$, with
respect to some system of compositions $(i_n, m_n)$ on $P$, is given by maps
\begin{align*}
    (\thg)^\ast & \colon \Gamma_n \to \Gamma_n\\
    \eta & \colon \Gamma_n \to \Gamma_{n+1}\\
    \epsilon & \colon \Gamma_n \to \Gamma_{n+1}
\end{align*}
for each $n \geqslant 1$, making the following diagrams commute:
\begin{equation}\label{diags}
\begin{gathered}
    \cd{
      \Gamma_n \ar[rr]^{(\thg)^\ast} \ar[dr]_{(s,t)} & & \Gamma_n \ar[dl]^{(t,s)}
      \\ &
      \Gamma_{n-1} \times \Gamma_{n-1}}
\\    \cd{
      \Gamma_n \ar[r]^{\eta} \ar[d]_{s} & \Gamma_{n+1} \ar[d]^{s}
      \\
      \Gamma_{n-1} \ar[r]_{[i_n]} & \Gamma_n
    } \qquad
        \cd{
      \Gamma_n \ar[r]^{\eta} \ar[d]_{((\thg)^\ast, \id)} & \Gamma_{n+1} \ar[d]^{t}
      \\
      \Gamma_n \mathbin{{}_s\!\times_t} \Gamma_n \ar[r]_-{[m_n]} & \Gamma_n
    }
\\    \cd{
      \Gamma_n \ar[r]^{\epsilon} \ar[d]_{t} & \Gamma_{n+1} \ar[d]^{t}
      \\
      \Gamma_{n-1} \ar[r]_{[i_n]} & \Gamma_n
    } \qquad
        \cd{
      \Gamma_n \ar[r]^{\eta} \ar[d]_{(\id, (\thg)^\ast)} & \Gamma_{n+1} \ar[d]^{s}
      \\
      \Gamma_n \mathbin{{}_s\!\times_t} \Gamma_n \ar[r]_-{[m_n]} & \Gamma_n
    }\ \text.
\end{gathered}
\end{equation}
We say that $(\Gamma, f)$ is a \emph{weak $\omega$-groupoid} if it has a choice
of duals with respect to every system of compositions on $P$.
\end{Defn}

\subsection{Types are weak $\omega$-groupoids}\label{proof2}
We are now ready to prove our main theorem.  It will follow from a general
result that shows a particular class of globular contexts to admit a weak
$\omega$-groupoid structure.
\begin{Defn}
Let $\C$ be an identity type category. A globular context $\Gamma$ is said to
be \emph{reflexive} if it comes equipped with morphisms
\begin{equation*}
    \cd[@C-0.5em]{
    \Gamma_0 \ar@<3pt>@/^10pt/[r]^{r_0} & \Gamma_1 \ar@<3pt>@/^10pt/[r]^{r_1} \ar@<3pt>[l]^-{t} \ar@<-3pt>[l]_-{s} & \Gamma_2 \ar@<3pt>@/^10pt/[r]^{r_2} \ar@<3pt>[l]^-{t} \ar@<-3pt>[l]_-{s} & \,\cdots \ar@<3pt>[l]^-{t} \ar@<-3pt>[l]_-{s}}
\end{equation*}
where each $r_i$ is an $\I$-map satisfying $sr_i = tr_i = \id_{\Gamma_{i}}$.
\end{Defn}
\begin{Thm}\label{reduce}
Every reflexive globular context $(\Gamma, r_i)$ admits a structure of weak
$\omega$-groupoid.
\end{Thm}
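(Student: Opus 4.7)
My plan is to realise in the axiomatic setting the strategy sketched informally in Section~\ref{overview}, with an extra step to handle the duals required by Definition~\ref{chduals}. For each pasting diagram $\pi \in (T1)_n$, I first construct a \emph{pointing} morphism $r_\pi \colon \Gamma_0 \to \Gamma^\pi$ by induction on $\pi$. The base case $r_\star \defeq \id$ is trivial. In the recursive case $\pi = (\pi_1, \dots, \pi_k)$, I observe that $\Gamma_{+1}$ is itself a reflexive globular context in the coslice identity type category $\C_{\Gamma_0 \times \Gamma_0}$; applying the construction inductively there produces maps $r_{\pi_j}$ internal to the coslice, and composing with $r_0 \colon \Gamma_0 \to \Gamma_1$ and assembling via the limit presentation of $\Gamma^\pi$ yields $r_\pi$.

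The next step is to prove by induction on $\pi$ that each $r_\pi$ lies in $\I$. This is the heart of the argument. The induction uses closure of $\I$ under composition together with the Frobenius axiom, which ensures that the pullbacks used to assemble $\Gamma^\pi$ from the spans $(\Gamma_{+1})^{\pi_j}$ transport $\I$-maps appropriately. With pointings in hand, define $P \subset [\Gamma, \Gamma]$ to consist of those operations of the form~\eqref{operation} whose component morphisms commute with the pointings $r_\pi$ and $r_{\iota_n}$; verifying closure under operadic composition and identities shows that $P$ is a sub-operad. Normality is immediate since $r_\star = \id$. Contractibility follows from Orthogonality: given boundary data as in~\eqref{tocomplete} all of whose components commute with the pointings, the required filler $f_n \colon \Gamma^\pi \to \Gamma^{\iota_n}$ is produced by lifting against the dependent projection $\Gamma^{\iota_n} \to B_n\Gamma^{\iota_n}$, with $r_\pi$ on the left edge, $r_{\iota_n}$ on top, and the bottom edge assembled from $f_{n-1}$ and $g_{n-1}$ together with the boundary projection of $\Gamma^\pi$; commutativity holds precisely because $f_{n-1}$ and $g_{n-1}$ commute with the pointings.

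The inclusion $P \hookrightarrow [\Gamma, \Gamma]$ presents $\Gamma$ as an internal $P$-algebra, so $\Gamma$ is a weak $\omega$-category. To upgrade this to a weak $\omega$-groupoid, I fix an arbitrary system of compositions $(i_n, m_n)$ on $P$ and construct the duals $(\thg)^\ast$, $\eta$, $\epsilon$ in the same spirit: each is obtained by lifting a square whose left edge is a reflexivity map $r_n$ (or a pullback thereof), whose right edge is the $\P$-map $(s,t) \colon \Gamma_n \to \Gamma_{n-1} \times \Gamma_{n-1}$ for the dual and $(s,t) \colon \Gamma_{n+1} \to B_{n+1}\Gamma$ for $\eta, \epsilon$, and whose remaining data are chosen to produce the required equations of~\eqref{diags} upon restriction along reflexivity. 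The main obstacle is the inductive verification that every pointing $r_\pi$ is an $\I$-map, since one must simultaneously juggle the coslice structure on $\C_{\Gamma_0 \times \Gamma_0}$, the limit presentation of $\Gamma^\pi$, and the Frobenius axiom at each stage; once this lemma is in place, the remaining construction of the operad, its contractibility, and the duals all reduce to mechanical applications of Orthogonality.
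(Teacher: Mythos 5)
Your outline follows the paper's own proof almost step for step: the inductive construction of the pointings $r_\pi \colon \Gamma_0 \to \Gamma^\pi$, the inductive proof that each $r_\pi$ is an $\I$-map via (Composition) and (Frobenius), the definition of $P$ as the operations commuting with the pointings, normality from $r_\star = \id$, contractibility by lifting $r_\pi$ against the $\P$-map $(s,t)\colon \Gamma_n \to B_n\Gamma$, and the duals obtained from further (Orthogonality) liftings against $(t,s)$ and $(s,t)$. The one place where your plan is genuinely thinner than the paper is the assertion that ``verifying closure under operadic composition and identities shows that $P$ is a sub-operad.'' This is not a mechanical check: the substitution in $[\Gamma,\Gamma]$ is only defined abstractly through Batanin's endomorphism-operad construction in a monoidal globular category, so there is no explicit formula against which to verify closure directly. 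The paper sidesteps this by exhibiting $P$ \emph{as} an endomorphism operad: it forms the pointed globular context $\Gamma_\ast$ in the coslice identity type category $\Gamma_0/\C$, shows $(\Gamma_\ast)^\pi$ is exactly $r_\pi \colon \Gamma_0 \to \Gamma^\pi$, and identifies $P$ with $[\Gamma_\ast,\Gamma_\ast]$, so that closure comes for free from Proposition~\ref{batanin}. You should either supply that identification or explain how you intend to access the operadic composition of $[\Gamma,\Gamma]$ concretely; as written, this step is a gap. (A trivial further remark: in your contractibility square the codomain of the right-hand $\P$-map should be $B_n\Gamma$, and $\Gamma^{\iota_n}$ is just $\Gamma_n$.)
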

To prove the theorem, we first exhibit a weak $\omega$-category structure, and
then show this to be a weak $\omega$-groupoid. To obtain the $\omega$-category
structure, we show the endomorphism operad $[\Gamma, \Gamma]$ of
Proposition~\ref{endo} to admit a normalised, contractible suboperad $P$;
whereupon the inclusion of operads $P \hookrightarrow [\Gamma, \Gamma]$
exhibits $\Gamma$ as a $P$-algebra, and hence a weak $\omega$-category.
\begin{Defn}\label{pointings}
Let $(\Gamma, r_i)$ be a reflexive globular context. We define, for each $\pi
\in (T1)_n$, a map $r_\pi \colon \Gamma_0 \to \Gamma^\pi$ by induction on
$\pi$. If $\pi = \star$, then we take $r_\pi \defeq \id_{\Gamma_0} \colon
\Gamma_0 \to \Gamma_0$. Otherwise, if $\pi = (\pi_1, \dots, \pi_k)$, then we
first observe that $(\Gamma_{+1}, r_{+1})$ is a reflexive globular context in
$\C_{\Gamma_0 \times \Gamma_0}$, where $(r_{+1})_n \defeq r_{n+1}$. Hence by
induction, we obtain, for each $1 \leqslant i \leqslant k$, maps
\begin{equation}\label{dashmaps}
 \cd{    \Gamma_1 \ar[dr]_{(s,t)}
    \ar[rr]^{r'_{\pi_i}} & & (\Gamma_+)^{\pi_i} \ar[dl]^{(s,t)} \\ & \Gamma_0 \times \Gamma_0}
\end{equation}
in $\C_{\Gamma_0 \times \Gamma_0}$. These now give rise to a diagram
\begin{equation}\label{thediag}
    \cd[@-1em]{ & & & \Gamma_0 \ar[dll]_{r'_{\pi_1} \circ r_0} \ar[d] \ar[drr]^{r'_{\pi_k} \circ r_0} \\
        & (\Gamma_{+1})^{\pi_1} \ar[dl]_s \ar[dr]^t & & \ \ \ \cdots \ \ \ \ar[dl]_s \ar[dr]^t & & (\Gamma_{+1})^{\pi_k} \ar[dl]_s
        \ar[dr]^t \\
        \Gamma_0 & & \Gamma_0 & & \Gamma_0 & & \Gamma_0
    }\ \text,
\end{equation}
wherein, by a straightforward calculation, any map from $\Gamma_0$ at the top
to some $\Gamma_0$ at the bottom is an identity. In particular, this means that
$\Gamma_0$, together with the maps out of it, form a cone over the remainder of
the diagram. But $\Gamma^\pi$ is, by definition, the limit of this subdiagram,
and so we induce a map $r_\pi \colon \Gamma_0 \to \Gamma^\pi$ as required.
\end{Defn}
\begin{Prop}\label{endo2} Let $(\Gamma, r_i)$ be a reflexive globular
context in $\C$. Then the globular operad $[\Gamma, \Gamma]$ has a suboperad
$P$ whose set  of operations of shape $\pi$ comprises all serially commutative
diagrams of the form~\eqref{operation} in which the $f_i$'s and $g_i$'s commute
with the pointings $r_\pi \colon \Gamma_0 \to \Gamma^\pi$ of
Definition~\ref{pointings}.
\end{Prop}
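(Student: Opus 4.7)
The plan is to verify three things: that the specified class of ``pointed'' operations is closed under the source and target maps of $[\Gamma,\Gamma]$, that it contains the identity operations $\iota_n$, and that it is closed under operadic substitution. Once these are in hand, the resulting sub-globular-set $P$ of $[\Gamma,\Gamma]$ automatically inherits a globular operad structure, with a cartesian augmentation to $T$ obtained by restriction.

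The linchpin is a compatibility of the pointings with the source and target projections: for every $\pi \in (T1)_n$ with $n \geqslant 1$,
\begin{equation*}
    \sigma \circ r_\pi \;=\; \tau \circ r_\pi \;=\; r_{\partial \pi}\ \text.
\end{equation*}
I would prove this by induction on $\pi$, unwinding Definitions~\ref{idxd} and~\ref{pointings}: the base case $\pi \in (T1)_1$ uses directly that $sr_0 = tr_0 = \id_{\Gamma_0}$, and the inductive step reduces to the same claim for the reflexive globular context $\Gamma_{+1}$ inside $\C_{\Gamma_0 \times \Gamma_0}$, combined with the universal property of the limit in~\eqref{thediag}. Given this compatibility, closure under source and target is a one-line chase: if $f_n \circ r_\pi = r_{\iota_n}$, then serial commutativity gives
\begin{equation*}
    f_{n-1} \circ r_{\partial \pi} \;=\; f_{n-1} \circ \sigma \circ r_\pi \;=\; \sigma \circ f_n \circ r_\pi \;=\; \sigma \circ r_{\iota_n} \;=\; r_{\iota_{n-1}}\ \text,
\end{equation*}
with the analogous chase for $g_{n-1}$ via $\tau$. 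Closure under identity operations is immediate, the relevant maps being identities.

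The substantive step is closure under substitution. Given $\Theta = (f_\bullet, g_\bullet) \in P_\pi$ and $\Psi \colon \hat\pi \to P_1$ with augmentation $\phi = \rho_1 \Psi$, unfolding the multiplication of the monad $[X_\Gamma, X_\Gamma]$ of Proposition~\ref{batanin} shows that the top-dimensional component of $\Theta \circ \Psi$ is the composite
\begin{equation*}
    \Gamma^{\phi \circ \pi} \xrightarrow{\;\Psi^\otimes\;} \Gamma^\pi \xrightarrow{\;f_n\;} \Gamma^{\iota_n}\ \text,
\end{equation*}
where $\Psi^\otimes$ is the map assembled from the top-dimensional components of the operations indexed by $\Psi$ by means of the iterated composition functors $\otimes_k$ of $\E(\C)$. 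The key lemma is that if every operation indexed by $\Psi$ is pointed, one has $\Psi^\otimes \circ r_{\phi \circ \pi} = r_\pi$; combined with pointedness of $\Theta$ this immediately yields $f_n \circ \Psi^\otimes \circ r_{\phi \circ \pi} = f_n \circ r_\pi = r_{\iota_n}$, as required. The assembly lemma itself I would prove by induction on $\pi$, running in parallel at all dimensions inside the appropriate iterated slice identity type categories; the case $\pi = \star$ is trivial, and for $\pi = (\pi_1, \dots, \pi_k)$ one invokes the inductive hypothesis on each $\pi_i$ and uses the auxiliary compatibility together with the limit description~\eqref{thediag} of $r_\pi$ to match the two cones into the diagram defining $\Gamma^\pi$.

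I expect the main obstacle to be the bookkeeping in this last induction: one must faithfully track how the iterated $\otimes_0$'s of $\E(\C)$ unfold into the iterated pullbacks realising $\Gamma^\pi$, and verify that the pointings in the various slice categories glue correctly with the pullback structure. Once this assembly lemma is in hand, the three closure properties combine to exhibit $P$ as the desired suboperad of $[\Gamma, \Gamma]$.
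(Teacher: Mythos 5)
Your proposal is correct in outline, but it takes a genuinely different route from the paper. You verify directly that the pointed operations are closed under source/target, identities, and substitution, with the substitution step resting on an ``assembly lemma'' ($\Psi^\otimes \circ r_{\phi\circ\pi} = r_\pi$) proved by induction on $\pi$. The paper instead sidesteps all three verifications at once: it forms the globular context $\Gamma_\ast$ in the coslice identity type category $\Gamma_0/\C$, whose structure maps are the $r_i$'s, and observes that the suboperad $P$ is precisely the endomorphism operad $[\Gamma_\ast,\Gamma_\ast]$ computed there via Proposition~\ref{batanin} --- so the operad axioms (including closure under substitution) are inherited for free. The only thing the paper has to check is the single identity $(\Gamma_\ast)^{\otimes\pi} = r_\pi$, which it does by induction on $\pi$ using that precomposition $(r_0)_! \colon \Gamma_1/\C_{\Gamma_0\times\Gamma_0} \to \Gamma_0/\C_{\Gamma_0\times\Gamma_0}$ preserves limits; this identity carries essentially the same inductive content as your compatibility lemma and assembly lemma combined. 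Your approach buys concreteness --- one sees explicitly why substitution preserves pointedness --- at the cost of having to unwind the multiplication of $[X_\Gamma,X_\Gamma]$ through the iterated $\otimes_0$'s of $\E(\C)$, which is exactly the bookkeeping the paper's coslice trick is designed to avoid; if you pursue your route, that unwinding is the one place where the argument is not yet complete and would need to be written out carefully.
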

\begin{proof}
Let us write $\Gamma_\ast$ to denote the globular context
\begin{equation*}
    \cd[@C+0.5em]{ & \Gamma_0 \ar[dl]_{\id} \ar[d]|{r_0}
    \ar[dr]|{r_0r_1} \\
    \Gamma_0 & \Gamma_1 \ar@<3pt>[l]^-{t} \ar@<-3pt>[l]_-{s} & \Gamma_2 \ar@<3pt>[l]^-{t} \ar@<-3pt>[l]_-{s} & \,\cdots \ar@<3pt>[l]^-{t}
    \ar@<-3pt>[l]_-{s}}
\end{equation*}
in the identity type category $\Gamma_0 / \C$. We claim that that the object
$(\Gamma_\ast)^\pi \in \Gamma_0 / \C$ is given by $r_\pi \colon \Gamma_0 \to
\Gamma^\pi$. Observe that this implies the result, because the endomorphism
operad $[\Gamma_\ast, \Gamma_\ast]$ is then precisely the suboperad $P \subset
[\Gamma, \Gamma]$ we require. We will prove the claim by induction on $\pi$.
When $\pi = \star$ it is clear. So suppose now that $\pi = (\pi_1, \dots,
\pi_k)$. By the description given in Definition~\ref{idxd}, and the inductive
hypothesis, we see that $(\Gamma_\ast)^\pi$ is given by the unique map
$\Gamma_0 \to \Gamma^\pi$ induced by the following cone:
\begin{equation*}
    \cd[@-1em]{ & & & \Gamma_0 \ar[dll]_{r_{\pi_1}} \ar[d] \ar[drr]^{r_{\pi_k}} \\
        & (\Gamma_{+1})^{\pi_1} \ar[dl]_s \ar[dr]^t & & \ \ \ \cdots \ \ \ \ar[dl]_s \ar[dr]^t & & (\Gamma_{+1})^{\pi_k} \ar[dl]_s
        \ar[dr]^t \\
        \Gamma_0 & & \Gamma_0 & & \Gamma_0 & & \Gamma_0
    }\ \text.
\end{equation*}
Thus, it suffices to show that this cone coincides with~\eqref{thediag}; which
is to show that, for each $1 \leqslant i \leqslant k$, we have $r_{\pi_i} =
r'_{\pi_i} \circ r_0$. Now, observe that $r_{\pi_i}$ is obtained as
$((\Gamma_\ast)_{+1})^{\pi_i}$, where $(\Gamma_\ast)_{+1}$ is the globular
context
\begin{equation*}
    \cd[@C+0.5em]{ & \Gamma_0 \ar[dl]_{r_0} \ar[d]|{r_0r_1}
    \ar[dr]^{r_0r_1r_2} \\
    \Gamma_1 \ar[dr] & \Gamma_2 \ar@<3pt>[l]^-{t} \ar[d] \ar@<-3pt>[l]_-{s} & \Gamma_3 \ar[dl] \ar@<3pt>[l]^-{t} \ar@<-3pt>[l]_-{s} & \,\cdots \ar@<3pt>[l]^-{t}
    \ar@<-3pt>[l]_-{s} \\ & \Gamma_0 \times \Gamma_0}
\end{equation*}
in $\Gamma_0 / \C_{\Gamma_0 \times \Gamma_0}$. On the other hand, by a further
application of the inductive hypothesis, $r'_{\pi_i}$ is obtained as the map
$((\Gamma_{+1})_\ast)^{\pi_i}$, where $(\Gamma_{+1})_\ast$ is the globular
context
\begin{equation*}
    \cd[@C+0.5em]{ & \Gamma_1 \ar[dl]_{\id} \ar[d]|{r_1}
    \ar[dr]^{r_1r_2} \\
    \Gamma_1 \ar[dr] & \Gamma_2 \ar[d] \ar@<3pt>[l]^-{t} \ar@<-3pt>[l]_-{s} & \Gamma_3 \ar[dl] \ar@<3pt>[l]^-{t} \ar@<-3pt>[l]_-{s} & \,\cdots \ar@<3pt>[l]^-{t}
    \ar@<-3pt>[l]_-{s} \\ & \Gamma_0 \times \Gamma_0}
\end{equation*}
in $\Gamma_1 / \C_{\Gamma_0 \times \Gamma_0}$. But the functor $(r_0)_! \colon
\Gamma_1 / \C_{\Gamma_0 \times \Gamma_0} \to \Gamma_0 / \C_{\Gamma_0 \times
\Gamma_0}$ given by precomposition with the map $r_0 \colon \Gamma_0 \to
\Gamma_1$ of $\C_{\Gamma_0 \times \Gamma_0}$ sends the latter of these globular
contexts to the former; and thus, because $(r_0)_!$ preserves limits, it must
also send $((\Gamma_{+1})_\ast)^{\pi_i}$ to $((\Gamma_\ast)_{+1})^{\pi_i}$:
which is to say that $r_{\pi_i} = r'_{\pi_i} \circ r_0$ as required.
\end{proof}
Thus, for a reflexive globular context $(\Gamma, r_i)$, we have now defined the
suboperad $P \subset [\Gamma, \Gamma]$ required for the proof of
Theorem~\ref{reduce}. It remains only to show that $P$ is normalised and
contractible. To do this, we will need:
\begin{Prop}\label{allimaps}
Let $(\Gamma, r_i)$ be a reflexive globular context in $\C$. Then each of the maps
$r_\pi \colon \Gamma_0 \to \Gamma^\pi$ of Definition~\ref{pointings} is an
$\I$-map.
\end{Prop}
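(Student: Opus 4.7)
The plan is to proceed by induction on the structure of the pasting diagram $\pi$. The base case $\pi = \star$ is immediate: $r_\star = \id_{\Gamma_0}$ is an $\I$-map by the (Composition) axiom.

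For the inductive step, suppose $\pi = (\pi_1, \ldots, \pi_k)$. As observed in the proof of Proposition~\ref{endo2}, the pair $(\Gamma_{+1}, r_{+1})$ is a reflexive globular context in the identity type category $\C_{\Gamma_0 \times \Gamma_0}$, so the inductive hypothesis applied there yields that each map $r'_{\pi_i} \colon \Gamma_1 \to (\Gamma_{+1})^{\pi_i}$ appearing in~\eqref{dashmaps} is an $\I$-map in $\C_{\Gamma_0 \times \Gamma_0}$, and hence an $\I$-map in $\C$ as well.

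The strategy is then to factor $r_\pi$ as a composite of $\I$-maps obtained by adding one ``factor'' at a time. Concretely, I would define intermediate objects $R_j \defeq (\Gamma_{+1})^{\pi_1} \times_{\Gamma_0} \cdots \times_{\Gamma_0} (\Gamma_{+1})^{\pi_j}$ for $0 \leqslant j \leqslant k$, where we set $R_0 = \Gamma_0$ and note that $R_k = \Gamma^\pi$; together with maps $u_j \colon R_{j-1} \to R_j$ that append the entry $r'_{\pi_j}(r_0(\ell_{j-1}(-)))$, where $\ell_{j-1} \colon R_{j-1} \to \Gamma_0$ denotes the ``last vertex'' projection. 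By construction, $r_\pi = u_k \circ u_{k-1} \circ \cdots \circ u_1$, so it suffices to prove that each $u_j$ is an $\I$-map.

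For $j = 1$ this is immediate, since $u_1$ is literally the composite $r'_{\pi_1} \circ r_0$ of two $\I$-maps. For $j \geqslant 2$ I would invoke the Frobenius axiom, applied to the $\I$-map $r'_{\pi_j} \circ r_0 \colon \Gamma_0 \to (\Gamma_{+1})^{\pi_j}$ and the second-projection map $R_j = R_{j-1} \times_{\Gamma_0} (\Gamma_{+1})^{\pi_j} \to (\Gamma_{+1})^{\pi_j}$, which is itself a pullback of the $\P$-map $\ell_{j-1}$ along $s$ and hence a $\P$-map. The resulting Frobenius pullback $R_j \times_{(\Gamma_{+1})^{\pi_j}} \Gamma_0$ is canonically isomorphic to $R_{j-1}$, and under this identification the pullback $\I$-map coincides with $u_j$. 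The main obstacle, and essentially the only point requiring real care, is verifying this last identification: it amounts to the observation that any $z \in \Gamma_0$ with $r'_{\pi_j}(r_0 z) = w$ automatically satisfies $z = sw = tw$, so that the extra ``pullback'' datum $z$ is uniquely determined by the element of $R_j$ via $z = \ell_{j-1}(y)$.
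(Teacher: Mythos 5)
Your proof is correct and follows essentially the same route as the paper's: both reduce the inductive step to recognising that each successive factor of the wide pullback is appended by pulling back an $\I$-map along a $\P$-map, so that (Frobenius) and (Composition) finish the argument. The paper merely packages this step as a standalone lemma about binary spans $B \leftarrow A \rightarrow C$ equipped with $\P$-retractions and $\I$-sections, applied repeatedly, whereas you inline the same Frobenius identification into an explicit factorisation $r_\pi = u_k \circ \cdots \circ u_1$.
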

\begin{proof}
We proceed by induction on $\pi$. When $\pi = \star$, we have $r_\pi$ an
identity map, and hence an $\I$-map. So suppose now that $\pi = (\pi_1, \dots,
\pi_k)$, and consider the diagram~\eqref{thediag} defining the map $r_\pi \colon
\Gamma_0 \to \Gamma^\pi$. In it, each of the maps $r'_{\pi_i}$ is an $\I$-map
by induction, and so because $r_0$ is an $\I$-map by assumption, and $\I$-maps
are closed under composition, $r'_{\pi_i} \circ r_0$ is also an $\I$-map.
Repeated application of the following lemma now completes the proof.
\end{proof}
\begin{Lemma}
Suppose that
\begin{equation*}
    \cd[@-1em]{& A \ar[dl]_i \ar[dr]^j \ar[dd]|{\id_A} \\ B \ar[dr]_p & & C \ar[dl]^q \\ &
    A}
\end{equation*}
is a commutative diagram in an identity type category $\C$. Suppose further
that $i$ and $j$ are $\I$-maps, and $p$ and $q$ are $\P$-maps. Then the induced
map $(i, j) \colon A \to B \times_A C$ is also an $\I$-map.
\end{Lemma}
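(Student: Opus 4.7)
The plan is to exhibit $(i,j) \colon A \to B \times_A C$ as the composite of two $\I$-maps, whereupon (Composition) concludes the proof. First, I note that the pullback projection $\pi_B \colon B \times_A C \to B$ is a $\P$-map, being a pullback of the $\P$-map $q$ along $p$ (by (Stability)). Then I factor
\begin{equation*}
  (i,j) \;=\; (iq,\,\id_C) \circ j \colon A \xrightarrow{\ j\ } C \xrightarrow{\ (iq,\,\id_C)\ } B \times_A C,
\end{equation*}
where the second morphism is well-defined because $p \circ (iq) = q$ (using $pi = \id_A$), and the equality of composites follows from $qj = \id_A$.

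The key observation is that $(iq,\,\id_C)$ is itself obtained by pullback: the square
\begin{equation*}
\cd{
  C \ar[d]_{q} \ar[r]^-{(iq,\,\id_C)} & B \times_A C \ar[d]^{\pi_B} \\
  A \ar[r]_-i & B
}
\end{equation*}
is cartesian in $\C$, since any element of the pullback $A \times_B (B \times_A C)$ is uniquely determined by its $C$-component via the relation $a = q(c)$. Applying (Frobenius) to the $\I$-map $i$ and the $\P$-map $\pi_B$, we conclude that $(iq,\,\id_C)$ is an $\I$-map. Since $j$ is an $\I$-map by hypothesis, (Composition) then yields that $(i,j)$ is an $\I$-map.

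The only subtle point in this argument is spotting the right factorization through $C$ (symmetrically, one could factor through $B$); once it is identified, the rest is a formal manipulation using the axioms of an identity type category, and no further use of the specific hypotheses $pi = qj = \id_A$ beyond what is needed to make the factorization well-defined.
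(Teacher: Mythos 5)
Your proof is correct and is essentially the paper's own argument reflected through the symmetry of the square: the paper factors $(i,j)$ as $A \xrightarrow{i} B \xrightarrow{(\id_B,\,jp)} B \times_A C$, identifies $(\id_B, jp)$ as the pullback of the $\I$-map $j$ along the $\P$-projection to $C$, and concludes by (Frobenius) and (Composition), exactly as you do with the roles of $i$ and $j$ interchanged. The only cosmetic difference is that the paper verifies the relevant square is cartesian via the pasting lemma rather than by a direct (generalized-element) check, and note in passing that $pi = \id_A$ is also needed in your cartesianness verification, not merely for well-definedness of the factorization.
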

\begin{proof}
We first form the pullback square
\begin{equation*}
    \cd{B \times_A C \ar[d]_{p'} \ar[r]^-{q'} & B \ar[d]^p \\
    C \ar[r]_q & A} \ \text.
\end{equation*}
Now the universal property of this pullback induces a factorisation of the
commutative square
\begin{equation*}
    \cd{B \ar[d]_{jp} \ar[r]^{\id_B} & B \ar[d]^p \\
    C \ar[r]_q & A}
\end{equation*}
as
\begin{equation*}
    \cd{B \ar[d]_{p} \ar[r]^-{j'} &
    B \times_A C \ar[d]^{p'} \ar[r]^-{q'} &
    B \ar[d]^p \\
    A \ar[r]_j
     & C \ar[r]_q
     & A} \ \text.
\end{equation*}
Since the outer rectangle has identities along both horizontal edges, it is a
pullback. But the right-hand square is a pullback, and so we deduce that the
left-hand square is too. Now $p'$ is a $\P$-map by (Stability), and $j$ is an
$\I$-map by assumption, and so by (Frobenius), $j'$ is also an $\I$-map. It
follows, by (Composition) and the fact that $i$ is an $\I$-map, that
\begin{equation*}
    A \xrightarrow i B \xrightarrow{j'} B \times_A C
\end{equation*}
is also an $\I$-map. But this map is the induced map $(i, j) \colon A \to B
\times_A C$, since it has $i$ as its projection onto $B$, and $jpi = j$ as its
projection onto $C$.
\end{proof}
\begin{Prop}
Let $(\Gamma, r_i)$ be a reflexive globular context in $\C$. Then the suboperad
$P \subset [\Gamma, \Gamma]$ of Proposition~\ref{endo2} is both normalised and
contractible.
\end{Prop}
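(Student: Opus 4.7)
The plan is to dispose of normality by direct inspection and to prove contractibility with a single diagonal-filling argument against (Orthogonality). For normality, I observe that an operation of shape $\star$ in $[\Gamma, \Gamma]$ is just a map $\Gamma_0 \to \Gamma_0$; the defining constraint of $P$ forces such a map to commute with the pointing $r_\star = \id_{\Gamma_0}$, so it must equal $\id_{\Gamma_0}$. Hence $P_\star$ is a singleton.

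For contractibility, I would fix $\pi \in (T1)_n$ with $n \geqslant 1$ and operations $\theta_1, \theta_2 \in P_{\partial\pi}$ satisfying the boundary-compatibility hypothesis (vacuously so when $n = 1$, since then $\theta_1 = \theta_2 = \id \in P_\star$ by normality). A direct induction from Definition~\ref{idxd} identifies $\Gamma^{\iota_n}$ with $\Gamma_n$, with $\sigma, \tau \colon \Gamma^{\iota_n} \to \Gamma^{\iota_{n-1}}$ corresponding to $s, t \colon \Gamma_n \to \Gamma_{n-1}$. Writing $B\Gamma^{\iota_n}$ for the pullback $\Gamma^{\iota_{n-1}} \times_{\Gamma^{\partial\iota_{n-1}}} \Gamma^{\iota_{n-1}}$ (with the convention $B\Gamma^{\iota_1} \defeq \Gamma_0 \times \Gamma_0$), this is precisely $B_n\Gamma$ from Definition~\ref{globctxt}, and the induced map $p \colon \Gamma^{\iota_n} \to B\Gamma^{\iota_n}$ is the $\P$-map $(s,t) \colon \Gamma_n \to B_n\Gamma$. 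Letting $\alpha, \beta \colon \Gamma^{\partial\pi} \to \Gamma^{\iota_{n-1}}$ denote the top maps of $\theta_1, \theta_2$, the compatibility hypothesis ensures that $k \defeq (\alpha\sigma, \beta\tau) \colon \Gamma^\pi \to B\Gamma^{\iota_n}$ is well-defined.

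I would then form the square
\begin{equation*}
\cd[@C+2em]{
\Gamma_0 \ar[d]_{r_\pi} \ar[r]^-{r_{\iota_n}} & \Gamma^{\iota_n} \ar[d]^{p} \\
\Gamma^\pi \ar[r]_-{k} & B\Gamma^{\iota_n}
}
\end{equation*}
and verify its commutativity; this reduces to the facts that $\alpha, \beta$ commute with pointings (since they lie in $P$) together with the identities $\sigma r_\pi = \tau r_\pi = r_{\partial\pi}$ and $s r_{\iota_n} = t r_{\iota_n} = r_{\iota_{n-1}}$, both of which follow by routine inductions from Definition~\ref{pointings}. Since $r_\pi$ is an $\I$-map by Proposition~\ref{allimaps} and $p$ is a $\P$-map, (Orthogonality) produces a filler $\phi_n \colon \Gamma^\pi \to \Gamma^{\iota_n}$. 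The lower triangle $p \circ \phi_n = k$ unpacks to $\sigma \phi_n = \alpha \sigma$ and $\tau \phi_n = \beta \tau$, while the upper triangle yields $\phi_n \circ r_\pi = r_{\iota_n}$. Assembling $\phi_n$ with the common lower sub-diagrams of $\theta_1, \theta_2$ then produces the required $\phi \in P_\pi$ with $s(\phi) = \theta_1$ and $t(\phi) = \theta_2$.

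The main obstacle is not conceptual but administrative: the identifications $\Gamma^{\iota_n} \cong \Gamma_n$ and $B\Gamma^{\iota_n} \cong B_n\Gamma$, and the commutativity of the above square (via $\sigma r_\pi = r_{\partial\pi}$ and $s r_{\iota_n} = r_{\iota_{n-1}}$), all require one to patiently unwind the inductive constructions of Definitions~\ref{idxd} and~\ref{pointings}. Once this bookkeeping is in hand, the essential content of the proof reduces to a single application of (Orthogonality), exactly mirroring the informal type-theoretic argument at the end of Section~\ref{overview}.
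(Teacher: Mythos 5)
Your proposal is correct and follows essentially the same route as the paper's own proof: normality by direct inspection of $P_\star$, and contractibility by assembling the map $k \colon \Gamma^\pi \to B_n\Gamma$ from the boundary data and applying (Orthogonality) to the square with $r_\pi$ (an $\I$-map by Proposition~\ref{allimaps}) against the $\P$-map $(s,t)$, reading off serial commutativity from the lower triangle and compatibility with the pointings from the upper one. The only cosmetic difference is that the paper works directly with $\Gamma_n$ and the composite $r_{n-1}\cdots r_0$ where you write $\Gamma^{\iota_n}$ and $r_{\iota_n}$; these coincide under the identification you note.
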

\begin{proof}
Note first that the set $P_\star$ is the set of all morphisms $f_0 \colon
\Gamma_0 \to \Gamma_0$ for which $f_0 \circ \id_{\Gamma_0} = \id_{\Gamma_0}$
and hence a singleton, which proves $P$ is normalised. To show it contractible,
we must show that, given a serially commutative diagram of the form
\begin{equation}\label{tocompl2}
   \cd{
     \Gamma^\pi \ar@<3pt>[d]^{\tau} \ar@<-3pt>[d]_{\sigma} &
     \Gamma_n \ar@<3pt>[d]^{t} \ar@<-3pt>[d]_{s} \\
     \Gamma^{\partial\pi} \ar@<3pt>[r]^{f_{n-1}} \ar@<-3pt>[r]_{g_{n-1}} \ar@<3pt>[d]^{\tau} \ar@<-3pt>[d]_{\sigma} &
     \Gamma_{n-1} \ar@<3pt>[d]^{t} \ar@<-3pt>[d]_{s} \\
     \vdots \ar@<3pt>[d]^{\tau} \ar@<-3pt>[d]_{\sigma} & \vdots \ar@<3pt>[d]^{t}
     \ar@<-3pt>[d]_{s} \\
     \Gamma_0 \ar@<3pt>[r]^{f_{0}} \ar@<-3pt>[r]_{g_{0}} & \Gamma_0
   }
\end{equation}
wherein each $f_i$ and $g_i$ commutes with the pointings, we can find a map
$f_n \colon \Gamma^\pi \to \Gamma_n$ completing the diagram (and commuting with
the pointings). First we note that the diagram
\begin{equation*}
\cd[@C+1.5em]{
    \Gamma^{\pi} \ar[r]^-{f_{n-1} \circ \sigma} \ar[d]_{g_{n-1} \circ \tau} & \Gamma_{n-1}
    \ar[d]^{(s,t)} \\
    \Gamma_{n-1} \ar[r]_-{(s,t)} & B_{n-1}\Gamma
}
\end{equation*}
commutes, as may be seen by postcomposing it with the two projections $B_{n-1}
\Gamma \rightrightarrows \Gamma_{n-2}$, and observing the resultant diagrams
commutative. Thus we induce a map $k \colon \Gamma^{\pi} \to B_n\Gamma$. We now
consider the diagram
\begin{equation*}
\cd[@C+3em]{
  \Gamma_0 \ar[d]_{r_\pi} \ar[r]^-{r_{n-1} \cdots r_0} & \Gamma_n \ar[d]^{(s,t)} \\
  \Gamma^\pi \ar[r]_{k} & B_n\Gamma
} \ \text.
\end{equation*}
That this is commutative once again follows from the fact that it is so upon
postcomposition with the two projections $B_{n} \Gamma \rightrightarrows
\Gamma_{n-1}$. Moreover, $r_\pi$ is an $\I$-map by Proposition~\ref{allimaps},
and $(s,t)$ a $\P$-map by the definition of globular context; so that by
(Orthogonality), we can find a map $f_n \colon \Gamma^\pi \to \Gamma_n$ making
both induced triangles commute. That the lower triangle commutes says that
$f_n$ renders the diagram~\eqref{tocompl2} serially commutative; whilst that
the upper triangle commutes says that $f_n$ commutes with the pointings.
\end{proof}

Thus we have shown the operad $P \subset [\Gamma, \Gamma]$ to be normalised and
contractible, from which it follows that the inclusion $P \hookrightarrow
[\Gamma, \Gamma]$ exhibits $\Gamma$ as a weak $\omega$-category. It remains to
show that this weak $\omega$-category is a weak $\omega$-groupoid.

\begin{Prop}
Let $(\Gamma, r_i)$ be a reflexive globular context in $\C$, and let $P \subset
[\Gamma, \Gamma]$ be the operad defined above. Then the inclusion $P
\hookrightarrow [\Gamma, \Gamma]$ exhibits $\Gamma$ as a weak
$\omega$-groupoid.
\end{Prop}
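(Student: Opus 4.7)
The plan is to invoke Proposition~\ref{characterisation}: given an arbitrary system of compositions $(i_n, m_n)$ on $P$, I construct the maps $(\thg)^\ast \colon \Gamma_n \to \Gamma_n$ and $\eta, \epsilon \colon \Gamma_n \to \Gamma_{n+1}$ of Definition~\ref{chduals} by three direct applications of (Orthogonality). Throughout, write $R_n \defeq r_{n-1} \circ \cdots \circ r_0 \colon \Gamma_0 \to \Gamma_n$, which is an $\I$-map by (Composition); and recall that $(s,t) \colon \Gamma_n \to B_n\Gamma$ is a $\P$-map by Definition~\ref{globctxt}.

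For $(\thg)^\ast$, I orthogonally lift the commutative square with $R_n$ along both the top and left, $(s,t)$ down the right, and $(t,s)$ along the bottom; commutativity is immediate since $sR_n = tR_n = R_{n-1}$. The lower triangle of the resulting filler is precisely the first diagram of~\eqref{diags}, while the upper triangle asserts that $(\thg)^\ast \circ R_n = R_n$---that is, $(\thg)^\ast$ fixes the pointing.

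For $\eta$, the two maps $[i_n] \circ s$ and $[m_n] \circ ((\thg)^\ast, \id)$ from $\Gamma_n$ to $\Gamma_n$ form a parallel pair: for $f \in \Gamma_n$, both $[i_n](s(f))$ and $[m_n](f^\ast, f)$ have source and target equal to $s(f)$, using $(s,t) \circ (\thg)^\ast = (t,s)$. They therefore induce a map $k \colon \Gamma_n \to B_{n+1}\Gamma$. I then apply orthogonality to the square with $R_n$ on the left, $R_{n+1}$ on top, $(s,t)$ on the right, and $k$ along the bottom: the operations $[i_n], [m_n] \in P$ commute with pointings (by construction of $P$), and $(\thg)^\ast \circ R_n = R_n$, so $k \circ R_n = (R_n, R_n) = (s,t) \circ R_{n+1}$. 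The resulting filler $\eta$ realises the two middle diagrams of~\eqref{diags}. The construction of $\epsilon$ is exactly symmetric, using $[m_n] \circ (\id, (\thg)^\ast)$ and $[i_n] \circ t$ in place of the maps above.

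The only substantive verification throughout is the commutativity of these two squares, which in each case reduces to pointing-invariance of the operations $[i_n], [m_n] \in P$ together with the pointing-invariance of $(\thg)^\ast$ extracted from its upper triangle. No further inductive machinery beyond the reflexive structure on $\Gamma$ and the orthogonality axiom is required---the latter plays precisely the same role here as in proving contractibility of $P$, just applied one more time to extract inverses rather than merely compositions.
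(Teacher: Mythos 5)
Your proposal is correct and follows essentially the same route as the paper's own proof: for each system of compositions you produce $(\thg)^\ast$, $\eta$ and $\epsilon$ by lifting against the very same three squares (with the $\I$-map $r_{n-1}\cdots r_0$ on the left and the $\P$-map $(s,t)$ on the right), your first square being the paper's composed with the harmless swap automorphism of $B_n\Gamma$. The only difference is cosmetic: you spell out the commutativity checks via pointing-invariance of $[i_n]$, $[m_n]$ and of $(\thg)^\ast$, which the paper leaves implicit.
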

\begin{proof}
According to Definition~\ref{chduals}, we must show that for any given system
of compositions $(i_n, m_n)$ for $P$, there is a corresponding choice of duals
for $\Gamma$. Now, for each $n \geqslant 1$ we have a commutative diagram
\begin{equation*}
\cd[@C+3em]{
  \Gamma_0 \ar[d]_{r_{n-1} \cdots r_0} \ar[r]^-{r_{n-1} \cdots r_0} & \Gamma_n \ar[d]^{(t,s)} \\
  \Gamma_n \ar[r]_{(s,t)} & B_n\Gamma
}\ \text.
\end{equation*}
The left-hand morphism is an $\I$-map, and the right-hand one a $\P$-map; and
so by (Orthogonality) we have a diagonal filler $(\thg)^\ast \colon \Gamma_n
\to \Gamma_n$. Commutativity of the lower triangle implies the commutativity of
the first diagram in~\eqref{diags}. We induce $\eta$ and $\epsilon$ similarly,
by considering the commutative squares
\begin{equation*}
\cd[@C+7em]{
  \Gamma_0 \ar[d]_{r_{n-1} \cdots r_0} \ar[r]^-{r_n r_{n-1} \cdots r_0} & \Gamma_{n+1} \ar[d]^{(s,t)} \\
  \Gamma_n \ar[r]_{\big([i_n]s\c [m_n] \circ ((\thg)^\ast, \id)\big)} & B_{n+1}\Gamma
}
\end{equation*}
\begin{equation*}
\cd[@C+7em]{
  \Gamma_0 \ar[d]_{r_{n-1} \cdots r_0} \ar[r]^-{r_n r_{n-1} \cdots r_0} & \Gamma_{n+1} \ar[d]^{(s,t)} \\
  \Gamma_n \ar[r]_{\big([m_n] \circ (\id, (\thg)^\ast)\c [i_n]t\big)} & B_{n+1}\Gamma
}\ \text.
\end{equation*}
Again, commutativity of the lower triangles entails the commutativity of the
remaining four diagrams in~\eqref{diags}.
\end{proof}

We have thus shown that every reflexive globular context in an identity type
category $\C$ bears a structure of weak $\omega$-groupoid. Note that in giving
this proof, we have nowhere used the axiom (Identities). In fact, the only
reason we need it is to show that from an object of $\C$ we can construct a
reflexive globular context corresponding to its tower of identity types.
\begin{Defn}\label{towerid}
Let $\C$ be an identity type category and $A \in \C$. We define a reflexive
globular context $\underline A \in \C$ by the following induction. For the base
case, we take $A_0 = A$. For the inductive step, suppose we have defined $A_0,
\dots, A_n$. Then we may form the \mbox{$n$-dim\-ensional} boundary $B_n
\underline A$ of $A$, and by induction the map $(s,t) \colon A_n \to B_n
\underline A$ is a $\P$-map. So by (Identities), we may factorise the diagonal
morphism $A_n \to A_n \times_{B_n \underline A} A_n$ as
\begin{equation*}
\cd{    A_n \ar[r]^-{r_{n+1}} & \Id(A_n) \ar[r]^-{e_{n+1}} & A_n \times_{B_n
\underline A} A_n}\ \text,
\end{equation*}
with $r_{n+1}$ an $\I$-map and $e_{n+1}$ a $\P$-map. We now define $A_{n+1}$ to
be $\Id(A_n)$, and $s, t \colon A_{n+1} \to A_n$ to be the composites of
$e_{n+1}$ with the two projection morphisms $A_n \times_{B_n \underline A} A_n
\to A_n$. It remains to show that the induced map $(s, t) \colon A_{n+1} \to
B_{n+1} \underline A$ is a $\P$-map. But we recall that $B_{n+1} \underline A$
was defined by the pullback diagram~\eqref{boundary}, so that $B_{n+1}
\underline A = A_n \times_{B_n \underline A} A_n$, and the induced map $(s, t)$
is precisely $e_{n+1}$, which is, by assumption, a $\P$-map.
\end{Defn}

This definition, together with Theorem~\ref{reduce} now immediately imply:

\begin{Thm}\label{main}
Let $\C$ be an identity type category and $A \in \C$. Then the globular context
$\underline A$ is a weak $\omega$-groupoid in $\C$.
\end{Thm}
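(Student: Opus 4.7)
The plan is to observe that Theorem \ref{main} follows almost immediately by combining Definition \ref{towerid} with Theorem \ref{reduce}. Indeed, Definition \ref{towerid} constructs $\underline A$ level by level from $A$, producing at each stage not only the object $A_{n+1} = A_n^I$ together with source, target and projection maps, but also, as part of the (Identities) factorisation~\eqref{cod}, an $\I$-map $r_{n+1} \colon A_n \to A_{n+1}$. Since Theorem \ref{reduce} asserts that any reflexive globular context bears a structure of weak $\omega$-groupoid, it suffices to observe that $\underline A$, so equipped, really is such a context.

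To do this I would check three points, each of which is essentially by construction. First, the pair $(s, t) \colon A_{n+1} \to B_{n+1} \underline A$ is a $\P$-map, this being precisely the content of the final sentence of Definition \ref{towerid}. Second, the globularity equations $ss = st$ and $ts = tt$ hold, since the source and target maps at level $n+1$ are obtained by composing $e_{n+1}$ with the two projections of the pullback $B_{n+1} \underline A = A_n \times_{B_n \underline A} A_n$, whose further composites into $B_n \underline A$ agree. Third, the reflexivity maps satisfy $s r_{n+1} = t r_{n+1} = \id_{A_n}$, which follows from the fact that $e_{n+1} r_{n+1}$ is by construction the diagonal into $A_n \times_{B_n \underline A} A_n$.

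With these verifications in hand, Theorem \ref{reduce} applies directly to the reflexive globular context $\underline A$ and yields the desired weak $\omega$-groupoid structure. There is no genuine obstacle remaining at this point, precisely because all the substantive work---the construction of the endomorphism operad $[\underline A, \underline A]$, the identification of a normalised contractible suboperad via the pointings $r_\pi$, and the exhibition of duals using (Orthogonality)---has been carried out in the proof of the reduction theorem. The present statement is simply the type-theoretic application for which that machinery was designed, and the only role of the axiom (Identities) here is to ensure that the tower $\underline A$ of iterated identity contexts over $A$ can be built in the first place.
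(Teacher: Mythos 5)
Your proposal is correct and is exactly the argument the paper intends: Definition~\ref{towerid} packages $\underline A$ as a reflexive globular context (including the $\I$-maps $r_{n+1}$ supplied by the (Identities) factorisation), and Theorem~\ref{reduce} then applies immediately. The three verifications you spell out are precisely the content already established in Definition~\ref{towerid}, so your write-up matches the paper's one-line deduction.
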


In particular, taking $\C$ to be the identity type category $\C_\mathbb T$
associated with some dependent type theory $\mathbb T$, we obtain:
\begin{Thm}
Let $\mathbb T$ be a dependent type theory admitting each of the rules
described in Section~\ref{mltt}. Then for each type $A$ of $\mathbb T$, the
tower of identity types over $A$ is a weak $\omega$-groupoid.
\end{Thm}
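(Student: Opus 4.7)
The plan is to derive this as a direct corollary of Theorem~\ref{main} applied to the identity type category $\C_\mathbb T$, with only a small amount of unpacking required to identify the abstractly constructed globular context $\underline A$ of Definition~\ref{towerid} with the concrete tower of identity types over $A$. First I would invoke the Proposition stating that $\C_\mathbb T$ is an identity type category, with $\P$ the dependent projections and $\I$ the injective equivalences. This allows us to view any type $A$ of $\mathbb T$ as an object of an identity type category, and hence to apply Theorem~\ref{main}, which produces a weak $\omega$-groupoid structure on $\underline A$ in the sense of Definition~\ref{palg} and Definition~\ref{chduals}.

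The remaining task is to explain why $\underline A$ so constructed is what is informally meant by \emph{the tower of identity types over $A$}. I would unpack the inductive construction of Definition~\ref{towerid} in $\C_\mathbb T$. At stage $n$, one has $A_n$ together with a $\P$-map $(s,t) \colon A_n \to B_n \underline A$; under the dictionary of Section~\ref{mltt}, $(s,t)$ corresponds to a dependent context $A_n = (B_n \underline A \c \vec z \in \Theta_n) \ \ctxt$ projecting onto $B_n \underline A$. Applying (Identities) to this $\P$-map produces an $\I/\P$ factorisation of its diagonal, which by the proof of the Proposition identifying $\C_\mathbb T$ as an identity type category is realised by the contextual identity types of Proposition 3.3.1 in \cite{Garner2008Two-dimensional}. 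Thus $A_{n+1}$ is the context of identity proofs between parallel $n$-cells of $A$, and the source/target maps coincide with the obvious projections. By induction on $n$, this agrees with
\begin{equation*}
  \bigl(x, y \in A\c p, q \in \Id_A(x,y)\c \dots\bigr)\ \Id_{\Id_{\cdots\Id_A(\cdots)}}(\thg, \thg) \ \ty\text,
\end{equation*}
so $\underline A$ is precisely the tower of iterated identity types.

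Having made this identification, the conclusion of Theorem~\ref{main} translates verbatim into the assertion that this tower carries a weak $\omega$-groupoid structure, which is exactly what is claimed. I expect no genuine obstacle here: both the existence of the categorical witness $\underline A$ and its coincidence with the syntactic tower are built in by construction, and all the real work—producing the normalised contractible suboperad $P \subset [\underline A, \underline A]$ and the choice of duals—has already been carried out in the axiomatic framework of Section~\ref{proof2}. The only mild care required is in matching the contextual form of the identity type rules used to verify (Identities) with the concrete inductive step of Definition~\ref{towerid}, but this is routine once one has the dictionary between dependent projections and dependent contexts.
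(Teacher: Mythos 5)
Your proposal is correct and follows the paper's own route exactly: the paper derives this theorem as an immediate specialisation of Theorem~\ref{main} to the identity type category $\C_\mathbb T$, relying on the earlier Proposition that dependent projections and injective equivalences furnish the required $\P$- and $\I$-classes. Your additional unpacking of Definition~\ref{towerid} to identify the abstract globular context $\underline A$ with the syntactic tower of iterated identity types is a reasonable elaboration of a step the paper leaves implicit.
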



\end{document}